\newtheorem{thm}{Theorem}[section]
\newtheorem{cor}[thm]{Corollary}
\newtheorem{prop}[thm]{Proposition}
\newtheorem{lem}[thm]{Lemma}
\theoremstyle{definition}
\theoremstyle{remark}
\setlist[enumerate]{itemsep=2ex, topsep=2ex} 
\setlist[itemize]{itemsep=2ex, topsep=2ex}
\renewcommand{\l}{\left}
\renewcommand{\r}{\right}
\newcommand{\sm}{\setminus}
\newcommand{\sub}{\subseteq}
\newcommand{\asc}{\mr{asc}}
\def\inv{\mr{inv}}
\newcommand{\Eul}{\genfrac{\langle}{\rangle}{0pt}{}}
\renewcommand{\c}[1]{\mathcal{#1}}
\renewcommand{\b}[1]{\mathbf{#1}}
\newcommand{\mr}[1]{\mathrm{#1}}
\newcommand{\SP}{\textbf{\textcolor{orange}{SKIP}}}
\renewcommand{\SP}{\mr{SPF}}
\newcommand{\kSP}{\SP_k}
\newcommand{\OPF}{\mr{OPF}}
\newcommand{\IPF}{\mr{IPF}}
\newcommand{\PF}{\mr{PF}}
\title{Subset Parking Functions}
\date{\today}
\author{Sam Spiro\footnote{Dept.\ of Mathematics, UCSD {\tt sspiro@ucsd.edu}.}}
\begin{document}
	\maketitle
\begin{abstract}
	A parking function $(c_1,\ldots,c_n)$ can be viewed as having $n$ cars trying to park on a one-way street with $n$ parking spots, where car $i$ tries to park in spot $c_i$, and otherwise he parks in the leftmost available spot after $c_i$.  Another way to view this is that each car has a set $C_i$ of ``acceptable'' parking spots, namely $C_i=[c_i,n]$, and that each car tries to park in the leftmost available spot that they find acceptable.
	
	Motivated by this, we define a subset parking function $(C_1,\ldots,C_n)$, with each $C_i$ a subset of $\{1,\ldots,n\}$, by having the $i$th car try to park in the leftmost available element of $C_i$.  We further generalize this idea by restricting our sets to be of size $k$, intervals, and intervals of length $k$.  In each of these cases we provide formulas for the number of such parking functions.
\end{abstract}
\section{Introduction}

Parking functions are a well studied object in combinatorics, and are often defined in the following way.  Imagine that there are $n$ parking spots labeled 1 though $n$ on a one way street.  There are $n$ cars, also labeled 1 through $n$, that wish to park in these spots, and each has a preferred parking spot $c_i$.  When it is car $i$'s turn to park, he goes to his preferred spot $c_i$ and parks there if it is empty.  Otherwise, he tries to park in the next available spot that is after $c_i$.  The tuple $(c_1,\ldots,c_n)$ is said to be a parking function if every car succeeds in parking.

For example, $(2,2,2)$ is not a parking function, as car 1 parks in slot 2; car 2 tries to park in slot 2 but can not and goes to the next available slot 3; and car 3 tries to park in slot 2 but can not, and there are no slots after this that are available.  On the other hand, $(2,1,1)$ is a parking function as car 1 parks in slot 2; car 2 parks in slot 1; and car 3 tries to park in slot 1 but can not, so he goes to the next available slot 3.  We will say that the parking function $(2,1,1)$ has outcome $213$, which describes how one would see the cars parked if one were to walk from slot 1 to slot $3$.

Parking functions are well studied and have many interesting combinatorial properties.  For example, a necessary and sufficient condition for $(c_1,\ldots,c_n)$ to be a parking function is, after rearranging the $c_i$ in increasing order as $b_1\le b_2\le \cdots\le b_n$, we have $b_i\le i$ for all $i$.  In particular, this shows that any permutations of the entries of a parking function is also a parking function.  One can also prove that the number of parking functions $\PF(n)$ satisfies \begin{equation}\label{E-Park}
\PF(n)=(n+1)^{n-1}.
\end{equation}
Parking functions have connections to many other areas of combinatorics, such as hyperplane arrangements \cite{Stan79} and the lattice of non-crossing partitions \cite{Stan80}.  We refer the reader to the survey of Yan \cite{CY} for an elegant proof of \eqref{E-Park} and a more in depth study of parking functions.

Many generalizations and variants of parking functions have been studied, such as $\b{x}$-parking functions \cite{Yx} and $G$-parking functions \cite{PS}. These examples generalize the $b_i\le i$ characterization of parking functions.  One can also generalize the parking analogy.  An example of this is to allow cars to park a few spaces before their preferred spot if this is already taken, which has been studied recently \cite{Naples}.

In this paper we also consider a variant of parking function that is obtained by modifying the parking rule.  To motivate the idea, we observe that a parking function $(c_1,\ldots,c_n)$ can be viewed as each car choosing a set $C_i=[c_i,n]$ of ``acceptable'' parking spaces, with each car parking in the leftmost available spot which is acceptable to them.  One can generalize this idea by allowing each $C_i$ to be an arbitrary set.  

To this end, let $[n]:=\{1,2,\ldots,n\}$ and let $\c{S}_n$ denote the set of permutations of size $n$ written in one line notation.  Given $n$ non-empty subsets $C_i\sub [n]$ and a permutation $\pi\in \c{S}_n$, we will say that $\c{C}=(C_1,\ldots,C_n)$ is a subset parking function with outcome $\pi$ if for all $1\le i\le n$, having $\pi_j=i$ implies $j$ is the smallest element of $C_i\sm\{\pi_{i'}^{-1}:i'<i\}$. That is, if car $i$ ends up in spot $j$, it must find spot $j$ to be acceptable, all the earlier spots which are acceptable are already taken, and no one has taken spot $j$ yet.  We let $\SP(n,\pi)$ denote the number of subset parking functions with outcome $\pi$, and we denote the total number of subset parking functions by $\SP(n):=\sum_{\pi\in \c{S}_n} \SP(n,\pi)$.  Technically we should say that $\SP(n)$ counts the number of parking functions of size $n$, but here and throughout we omit explicitly noting this dependency on $n$ whenever it is clear from context.

For example, $(\{2\},\{2,3\},\{1,2,3\})$ is a subset parking function with outcome $312$.  However, $(\{2,3\},\{1,2,3\},\{2\})$ is not a subset parking function since we require $\pi_2=1,\ \pi_1=2$, and then no choice from $C_3$ will work.  In particular this shows that for subset parking functions the order of the $C_i$ sets are important, which is not the case in the classical study of parking functions.

Our first goal is to enumerate subset parking functions.  To this end, we recall that the inversion number $\inv(\pi)$ of a permutation $\pi$ is equal to the number of pairs $(i,j)$ such that $i<j$ and $\pi_j<\pi_i$.

\begin{thm}\label{T-SP}
	For any integer $n\ge 1$ and $\pi\in \c{S}_n$,
	\[\SP(n,\pi)=2^{n(n-1)-\inv(\pi)},\]
	\[\SP(n)=\prod_{i=0}^{n-1}(2^n-2^i).\]

\end{thm}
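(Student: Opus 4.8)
The strategy is to fix a permutation $\pi \in \c{S}_n$ and directly count the tuples $(C_1,\ldots,C_n)$ that yield outcome $\pi$, by building the $C_i$ one car at a time. When car $i$ is about to park, spots $\pi^{-1}_{i'}$ for $i' < i$ are occupied, and we need spot $\pi^{-1}_i$ to be ``the smallest acceptable spot still free.'' So the constraint on $C_i$ is exactly: (a) $\pi^{-1}_i \in C_i$; (b) $C_i$ contains \emph{no} free spot smaller than $\pi^{-1}_i$; and (c) $C_i$ may contain arbitrary subsets of everything else (the already-occupied spots, and the free spots larger than $\pi^{-1}_i$). Crucially, once we observe that which spots are occupied when car $i$ parks depends only on $\pi$ (not on the earlier choices of $C_{i'}$), these choices are independent across $i$, so $\SP(n,\pi) = \prod_{i=1}^n N_i$ where $N_i$ is the number of valid $C_i$.

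For the count $N_i$: spot $\pi^{-1}_i$ is forced in, so that contributes a factor $1$; each of the $n-1$ remaining spots is either (i) a free spot $< \pi^{-1}_i$, which is forbidden, contributing factor $1$ (it must be excluded), or (ii) anything else — an occupied spot, or a free spot $> \pi^{-1}_i$ — which is unconstrained, contributing factor $2$. So $N_i = 2^{(\text{number of free spots} > \pi^{-1}_i \text{ at car } i\text{'s turn}) + (\text{number of occupied spots at car } i\text{'s turn})}$. The occupied spots number $i-1$. The key combinatorial step is to identify the number of free spots larger than $\pi^{-1}_i$ at that moment: the free spots are precisely $\{\pi^{-1}_{i'} : i' \ge i\}$, so the count of those exceeding $\pi^{-1}_i$ is $\#\{i' > i : \pi^{-1}_{i'} > \pi^{-1}_i\}$. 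Summing the exponents over all $i$: the occupied-spot part gives $\sum_{i=1}^n (i-1) = \binom{n}{2}$, and the free-spot part gives $\sum_i \#\{i' > i : \pi^{-1}_{i'} > \pi^{-1}_i\}$, which is the number of \emph{non-inversions} of $\pi^{-1}$ among pairs, i.e. $\binom{n}{2} - \inv(\pi^{-1})$. Since $\inv(\pi^{-1}) = \inv(\pi)$, the total exponent is $2\binom{n}{2} - \inv(\pi) = n(n-1) - \inv(\pi)$, giving the first formula.

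For the second formula, sum over $\pi$: $\SP(n) = \sum_{\pi} 2^{n(n-1) - \inv(\pi)} = 2^{n(n-1)} \sum_{\pi \in \c{S}_n} 2^{-\inv(\pi)}$. Here I would invoke the classical generating-function identity $\sum_{\pi \in \c{S}_n} q^{\inv(\pi)} = \prod_{i=1}^{n} \frac{1-q^i}{1-q} = \prod_{i=0}^{n-1}(1 + q + \cdots + q^i)$. Setting $q = 1/2$ and multiplying through: $2^{n(n-1)} \prod_{i=0}^{n-1}(1 + 2^{-1} + \cdots + 2^{-i})$. Since $n(n-1) = 2\sum_{i=0}^{n-1} i = \sum_{i=0}^{n-1} i + \sum_{i=0}^{n-1} i$, I can distribute a factor $2^i$ into the $i$-th term twice — wait, more carefully: $\sum_{i=0}^{n-1} i = \binom n2$, and $2^{n(n-1)} = 2^{2\binom n2}$, so I distribute $2^{2i}$... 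Let me instead write $2^i \cdot (1 + 2^{-1} + \cdots + 2^{-i}) = 2^i + 2^{i-1} + \cdots + 1 = 2^{i+1} - 1$, which uses only $\sum_{i=0}^{n-1} i = \binom n2$ worth of the powers of $2$; the leftover is $2^{n(n-1) - \binom n2} = 2^{\binom n2}$, and distributing \emph{that} as $\prod 2^i$ over $i=0,\ldots,n-1$ turns $\prod(2^{i+1}-1)$ into $\prod(2^{2i+1} - 2^i) = \prod 2^i(2^{i+1}-1)$. Hmm, that gives $\prod (2^{2i+1}-2^i)$, not quite the stated $\prod(2^n - 2^i)$; I should instead recognize $\prod_{i=0}^{n-1}(2^{i+1}-1)\cdot 2^{\binom n2}$ and match it to $\prod_{i=0}^{n-1}(2^n - 2^i) = \prod_{i=0}^{n-1} 2^i(2^{n-i}-1) = 2^{\binom n2}\prod_{i=0}^{n-1}(2^{n-i}-1) = 2^{\binom n2}\prod_{j=1}^{n}(2^j - 1)$, and indeed $\prod_{i=0}^{n-1}(2^{i+1}-1) = \prod_{j=1}^n(2^j-1)$, so the two agree.

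**Main obstacle.** The genuinely delicate point is the independence claim and the bookkeeping of \emph{which spots are free when car $i$ parks} — one must verify that, given that the outcome is $\pi$, the set of occupied spots just before car $i$ moves is exactly $\{\pi^{-1}_1,\ldots,\pi^{-1}_{i-1}\}$ regardless of the $C_j$'s, so that the constraints on distinct $C_i$ factor cleanly. This is essentially immediate from the definition of ``outcome $\pi$'' (car $i$ ends in spot $\pi^{-1}_i$), but it should be stated carefully, perhaps as a short lemma, since the whole product formula rests on it. After that, the only remaining care is the elementary algebra reconciling $2^{n(n-1)}\prod(1 + \cdots + 2^{-i})$ with $\prod(2^n - 2^i)$, which is routine.
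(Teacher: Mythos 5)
Your proof is correct. For the formula $\SP(n,\pi)=2^{n(n-1)-\inv(\pi)}$ your argument is essentially the paper's: the paper also builds $(C_1,\ldots,C_n)$ car by car (formalizing your ``which spots are free depends only on $\pi$'' observation via a short lemma on partial parking functions), and at step $i$ counts the valid $C_i$ as those containing $\pi_i^{-1}$, excluding the still-free earlier spots, and arbitrary elsewhere; the paper just routes this through a more general statement (Theorem~\ref{T-TechPi}, for arbitrary lists of allowed set sizes) and records the exponent as $n-1-\inv_i(\pi)$ rather than your occupied-plus-larger-free-spots bookkeeping, which is the same count. Where you genuinely diverge is the formula for $\SP(n)$: you sum $\SP(n,\pi)$ over $\pi$ using the $q$-factorial identity $\sum_\pi q^{\inv(\pi)}=\prod_{i=0}^{n-1}(1+q+\cdots+q^i)$ at $q=1/2$, whereas the paper counts $\SP(n)$ directly without conditioning on the outcome: at step $i$ the valid $C_i$ are exactly the nonempty subsets not contained in the $i-1$ already-occupied spots, giving $(2^n-1)-(2^{i-1}-1)=2^n-2^{i-1}$ choices independent of history, hence the product immediately. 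The paper explicitly remarks that your route is an available alternative. The direct count is shorter and needs no generating-function input (and extends painlessly to the $\c{L}$- and ordered variants), while your route has the small bonus of exhibiting the identity $\sum_{\pi}2^{n(n-1)-\inv(\pi)}=\prod_{i=0}^{n-1}(2^n-2^i)$ as the content of the equality of the two formulas; your closing algebra, though somewhat meandering, does reconcile the two products correctly.
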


(Classical) parking functions are subset parking functions where each $C_i$ is required to be an interval of the form $[c_i,n]$.  We can get other interesting variants by restricting the $C_i$ sets in other ways. For example, we say that $\c{C}=(C_1,\ldots,C_n)$ is a $k$-subset parking function if $\c{C}$ is a subset parking function and $|C_i|=k$ for all $i$.  We let $\kSP(n,\pi)$ denote the number of $k$-subset parking functions with outcome $\pi$ and $\kSP(n)$ the number of $k$-subset parking functions.

To state our next result, we define the local inversion number $\inv_i(\pi)$ to be the number of pairs $(i,j)$ with $i<j$ and $\pi_j<\pi_i$.  Observe that $\inv(\pi)=\sum \inv_i(\pi)$.  We adopt the convention that ${0\choose 0}=1$ and ${0\choose x}=0$ for $x>0$.

\begin{thm}\label{T-k}
	For any integer $n\ge 1$, $\pi\in \c{S}_n$, and $1\le k\le n$,
	\[\kSP(n,\pi)=\prod_{i=1}^{n} {n-\inv_{i}(\pi)-1\choose k-1},\]
	\[\kSP(n)=\prod_{i=0}^{n-1}\l({n\choose k}-{i\choose k}\r).\]

\end{thm}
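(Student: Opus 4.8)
The plan is to count the admissible set-sequences one car at a time, and the guiding observation is that the number of choices of $C_i$ that keep the process alive depends on the earlier choices only through how many slots they have already filled; when we additionally fix the outcome $\pi$, it depends only on $\pi$.

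First fix $\pi\in\c{S}_n$ and suppose $\c{C}=(C_1,\ldots,C_n)$ is a $k$-subset parking function with outcome $\pi$. Then car $i$ lands in slot $\pi^{-1}(i)$, so by the time car $i$ parks the occupied slots are exactly $\{\pi^{-1}(1),\ldots,\pi^{-1}(i-1)\}$, a set determined by $\pi$ alone. Unwinding the definition of an outcome, $C_i$ is admissible precisely when it contains $\pi^{-1}(i)$, contains no slot that is both smaller than $\pi^{-1}(i)$ and currently unoccupied, and has its remaining $k-1$ elements forming an arbitrary $(k-1)$-subset of the set $B_i$ of slots that are either larger than $\pi^{-1}(i)$ or already occupied. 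Since these choices are independent across $i$, it follows that $\kSP(n,\pi)=\prod_{i=1}^n\binom{|B_i|}{k-1}$. To finish this part one computes $|B_i|$: the $n-1$ slots other than $\pi^{-1}(i)$ split into the slots larger than $\pi^{-1}(i)$, the smaller occupied ones, and the smaller unoccupied ones, and $B_i$ is the union of the first two classes; a slot smaller than $\pi^{-1}(i)$ is unoccupied exactly when it holds a car labelled larger than $i$, so the third class has size $\inv_i(\pi)$ (equivalently, $\inv_i(\pi)$ counts the inversions of $\pi$ in which the value $i$ is the smaller entry, which is the reading for which $\sum_i\inv_i=\inv$). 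Hence $|B_i|=n-1-\inv_i(\pi)$, which gives the first formula; the conventions $\binom{0}{0}=1$ and $\binom{0}{x}=0$ make it correct even when $B_i$ is too small to supply $k-1$ elements.

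For the total count I would argue directly, ignoring the outcome. A sequence $(C_1,\ldots,C_n)$ of $k$-subsets is a $k$-subset parking function iff, processing the cars in order, each car finds an unoccupied slot in its set; granting this, when car $i$ is about to park there are exactly $i-1$ occupied slots, so the number of \emph{bad} choices of $C_i$ (those contained in the occupied set) is $\binom{i-1}{k}$ and the number of admissible ones is $\binom{n}{k}-\binom{i-1}{k}$, a count that does not depend on which slots were filled. Multiplying over $i=1,\ldots,n$ gives $\kSP(n)=\prod_{i=0}^{n-1}\bigl(\binom{n}{k}-\binom{i}{k}\bigr)$. Alternatively this follows from the first formula: as $\pi$ varies uniformly over $\c{S}_n$ the coordinates $\inv_i(\pi)$ are independent with $\inv_i$ uniform on $\{0,1,\ldots,n-i\}$, so $\sum_{\pi}\prod_i\binom{n-\inv_i(\pi)-1}{k-1}$ factors, and each factor $\sum_{a=0}^{n-i}\binom{n-1-a}{k-1}$ equals $\binom{n}{k}-\binom{i-1}{k}$ by the hockey-stick identity.

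The work is mostly bookkeeping; the one point that needs care is the first step — turning the ``park in the leftmost acceptable available slot'' rule into the clean description of the admissible $C_i$, and in particular verifying that the unoccupied slots below $\pi^{-1}(i)$ are counted by $\inv_i(\pi)$. The engine of the second formula is the remark that the number of admissible sets at step $i$ is a function of $i$ alone, which is exactly what lets the product collapse; once that is isolated there is nothing more to prove.
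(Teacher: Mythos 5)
Your proof is correct and follows essentially the same route as the paper: the paper proves general formulas for $\c{L}$-parking functions (Theorems~\ref{T-Tech} and~\ref{T-TechPi}) by exactly this car-by-car counting --- at step $i$ the admissible $C_i$ are the $k$-subsets not contained in the $i-1$ occupied spots (for the total count), respectively the $k$-subsets containing $\pi_i^{-1}$ and avoiding the $\inv_i(\pi)$ smaller still-unoccupied spots (for the fixed-outcome count) --- and then specializes to $L_i=\{k\}$. Your alternative derivation of the second formula from the first, via the inversion-table bijection and the hockey-stick identity, is a nice direct proof of $\sum_{\pi}\prod_i\binom{n-\inv_i(\pi)-1}{k-1}=\prod_i\bigl(\binom{n}{k}-\binom{i-1}{k}\bigr)$, an identity the paper records while remarking that it is not aware of a direct proof.
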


We next consider the case that each $C_i$ is an interval, and we call such parking functions interval parking functions. Let $\IPF(n)$ and $\IPF(n,\pi)$ be the total number of interval parking functions and the number of interval parking functions with outcome $\pi$, respectively.

To state our full result, given a permutation $\pi$ we define $a_i(\pi)$ to be the largest $j$ with $1\le j\le i$ such that $\pi_i\ge  \{\pi_{i},\pi_{i-1},\ldots,\pi_{i-j+1}\}$.  For example, $a_i(\pi)\ge 2$ if and only if $\pi_i>\pi_{i-1}$.  As another example, for $\pi=31524$, we have $a_i(\pi)$ equal to 1, 1, 3, 1, 2 as $i$ ranges from 1 to 5.  Finally, define $\PF(n,\pi)$ to be the number of (classical) parking functions with outcome $\pi$.

\begin{thm}\label{T-I}
	For any $n\ge 1$ and $\pi\in \c{S}_n$, 
	\[\IPF(n,\pi)=n!\cdot \PF(n,\pi)=n! \prod_{i=1}^n a_i(\pi),\]
	\[\IPF(n)=n!\cdot \PF(n)=n!\cdot (n+1)^{n-1}.\]
\end{thm}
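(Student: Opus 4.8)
The plan is to prove the refined identity $\IPF(n,\pi) = n!\cdot \PF(n,\pi) = n!\prod_{i=1}^n a_i(\pi)$ for each fixed $\pi$, and then sum over $\pi\in\c{S}_n$; since $\sum_\pi \PF(n,\pi) = \PF(n) = (n+1)^{n-1}$ by \eqref{E-Park}, the second displayed equation follows immediately from the first. So the whole content is the per-outcome count. I would first establish $\PF(n,\pi) = \prod_{i=1}^n a_i(\pi)$, then show each classical parking function with outcome $\pi$ lifts to exactly $n!$ interval parking functions with the same outcome — the factor $n!$ being ``outcome-independent'' is the cleanest way to package it.

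For the formula $\PF(n,\pi) = \prod a_i(\pi)$: a classical parking function is a tuple $(c_1,\dots,c_n)$ with $C_i = [c_i, n]$, and it has outcome $\pi$ exactly when, for each $i$, car $i$ lands in spot $\pi^{-1}(i)$. Write $p_i := \pi^{-1}(i)$ for the spot of car $i$. Car $i$ lands in $p_i$ iff its preference $c_i$ is acceptable-and-leads-there: $c_i \le p_i$ (so spot $p_i$ is $\ge c_i$), and every spot in $[c_i, p_i - 1]$ is already occupied when car $i$ parks — i.e. occupied by some car $i' < i$. The spots occupied before car $i$ parks are exactly $\{p_{i'} : i' < i\} = \{\pi_1, \dots, ?\}$... more usefully, reading the outcome word $\pi = \pi_1\cdots\pi_n$, the cars occupying the first few spots $1, 2, \dots$ in the final configuration are $\pi_1, \pi_2, \dots$, and car $i$ sits in position $p_i$ with $\pi_{p_i} = i$. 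The condition ``all of $[c_i, p_i-1]$ occupied before car $i$ acts'' translates, after unwinding, to: $c_i$ can be any value in the maximal block of consecutive spots ending at $p_i$ whose occupants all have label $< i$ — and a short induction on the reading order shows this block has length exactly $a_i(\pi)$ (the longest run $\pi_{i}, \pi_{i-1}, \dots, \pi_{i-j+1}$ with $\pi_i$ the maximum, where I am relabeling so that $\pi_i$ plays the role of spot $p_i$; the point is that the occupant immediately to the left of a spot is the car that got pushed there, and the run condition ``$\pi_i$ is the max of the window'' is precisely the condition that those earlier spots were filled by earlier cars). Since the choices of the $c_i$ are independent across $i$ once we know they only constrain $c_i$ to an $a_i(\pi)$-element set, $\PF(n,\pi) = \prod_i a_i(\pi)$.

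For the factor of $n!$: given an interval parking function $\c{C} = ([l_1, r_1], \dots, [l_n, r_n])$ with outcome $\pi$, the left endpoints $l_i$ behave exactly like the preferences $c_i$ of a classical parking function — the parking process only ever uses the leftmost acceptable free spot, and once car $i$ has parked in spot $p_i$ we have $l_i \le p_i \le r_i$ — so $(l_1, \dots, l_n)$ is a classical parking function with the same outcome $\pi$. Conversely, the claim is that for \emph{any} classical parking function $(c_1,\dots,c_n)$ with outcome $\pi$, the number of ways to choose right endpoints $r_i \ge p_i$ making $([c_1,r_1],\dots,[c_n,r_n])$ an interval parking function with outcome $\pi$ is exactly $n!$, independent of $\pi$ and of the $c_i$. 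The natural guess: $r_i$ can be any value in $[p_i, n]$ \emph{provided} we do not let an enlarged interval steal a spot from a later car — but in fact enlarging $r_i$ to the right never changes where car $i$ parks (car $i$ still takes the leftmost free acceptable spot, and spot $p_i$ was free and acceptable already), and never changes any \emph{earlier} car; it can only change later cars if... no — the key observation is that car $i$ parks in $p_i$ regardless of $r_i \in [p_i,n]$, hence the final configuration is unchanged for \emph{all} cars, so every choice $r_i \in [p_i, n]$ works. That gives $\prod_i (n - p_i + 1) = \prod_{p=1}^n p = n!$ choices, independent of $\pi$. Multiplying, $\IPF(n,\pi) = n!\cdot\PF(n,\pi)$.

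The main obstacle I anticipate is the bookkeeping in the first step: carefully proving that the set of valid preferences $c_i$ for a fixed outcome $\pi$ is an interval of size exactly $a_i(\pi)$. This requires arguing that the block of consecutive occupied spots immediately to the left of $p_i$ at the moment car $i$ parks coincides with the run appearing in the definition of $a_i(\pi)$, which is an induction on $i$ (in car order) tracking the invariant that the set of occupied spots after cars $1,\dots,i$ have parked is a union of ``left-justified-within-runs'' blocks determined by the prefix structure of $\pi$. The second step, by contrast, is essentially a one-line observation once phrased correctly: extending the right endpoint of an interval never affects the parking outcome, because the parking rule only ever probes leftward-available spots. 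One should double-check the edge case ${0\choose 0}=1$ style conventions are not needed here (they aren't — $a_i(\pi) \ge 1$ always, since $j=1$ always qualifies).
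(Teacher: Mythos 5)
Your argument is correct, and the central step — the factor of $n!$ — is obtained by a genuinely different route from the paper's. The paper derives $\IPF(n,\pi)$ as the special case $K_i=[n]$ of its general Theorem~\ref{T-TechI} for $\c{K}$-interval parking functions, which requires the separate computation (Lemma~\ref{L-Sum}) that $\sum_{k=1}^n b_i(\pi,k)=a_i(\pi)(n-i+1)$, a two-case summation with binomial identities; multiplying over $i$ then produces $n!\prod a_i(\pi)$. You instead give a direct product decomposition: an interval parking function $([l_1,r_1],\ldots,[l_n,r_n])$ with outcome $\pi$ is exactly a classical parking function $(l_1,\ldots,l_n)$ with outcome $\pi$ together with an arbitrary choice of $r_i\in[p_i,n]$ for each $i$, where $p_i=\pi^{-1}_i$ — the defining condition ``$p_i$ is the least element of $C_i$ not yet occupied'' constrains only the left endpoint, and enlarging $r_i$ never changes any car's spot. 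Since $\prod_i(n-p_i+1)=n!$ as $p_i$ runs over a permutation of $[n]$, this explains the outcome-independence of the $n!$ factor bijectively, which the paper's computation does not. Your first step, $\PF(n,\pi)=\prod a_i(\pi)$, is essentially the paper's Proposition~\ref{P-Park} (proved there by the same iterative car-by-car count via Lemma~\ref{L-P2}); your sketch of the run/block argument is the right one, though as you note it is the place where the bookkeeping must be done carefully — the valid preferences for car $i$ form the interval $[p_i-a_{p_i}(\pi)+1,\,p_i]$ of size $a_{p_i}(\pi)$, and the product reindexes to $\prod_i a_i(\pi)$. The trade-off: the paper's route reuses machinery already needed for Theorem~\ref{T-kI}, while yours is more elementary, self-contained, and conceptually transparent about where $n!$ comes from.
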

Other properties of interval parking functions are currently being investigated by Christensen, DeMuse, Martin, and Yin \cite{CDM}.

The last variant we consider are $k$-interval parking functions, which are interval parking functions where each $C_i$ is an interval containing $k$ elements.  We let $\IPF_k(n)$ denote the number of $k$-interval parking functions and $\IPF_k(n,\pi)$ the number of those with outcome $\pi$.  Define $\c{S}_n^k$ to be the set of permutations $\pi$ of order $n$ with $\pi_n>\pi_{n-1}>\cdots >\pi_{n-k+1}$.

\begin{thm}\label{T-kI}
	Let $k$ and $n$ be integers with $1\le k\le n$ and let $\pi\in \c{S}_n$. If $\pi\notin \c{S}_n^{k}$, then $\IPF_k(n,\pi)=0$.  Otherwise,\[\IPF_k(n,\pi)=\prod_{i=1}^{n-k} \min\{a_i(\pi),k\}\cdot \prod_{i=n-k+1}^{n}\min\{n-i-k+a_{i}(\pi)+1,n-i+1\}.\]
\end{thm}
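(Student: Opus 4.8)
The plan is to build the $k$-interval parking function $\c{C}=(C_1,\ldots,C_n)$ with outcome $\pi$ one car at a time, counting at each step the number of valid length-$k$ intervals $C_i$ that send car $i$ to spot $\pi^{-1}_i$ given the positions already occupied. First I would establish the $\pi\notin\c{S}_n^k$ case: the last $k$ cars to park occupy the last $k$ spots in some order, but each of these cars has an interval $C_i$ of length $k$ that must fit inside $[1,n]$; once spots $1,\ldots,n-k$ are all taken (which happens after the first $n-k$ cars have parked, though I should be careful that "last $k$ cars" refers to labels, not parking order), the only interval of length $k$ still containing an empty spot is $[n-k+1,n]$, and such a car parks in the leftmost empty spot of that block. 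Tracking this forces $\pi_n>\pi_{n-1}>\cdots>\pi_{n-k+1}$, because each successive large-labeled car among the last $k$ sees one fewer empty spot in $[n-k+1,n]$ and must land strictly to the right of its predecessor. (I expect the cleanest formulation is: for $i>n-k$, the spot $\pi^{-1}_i$ is determined up to the constraint that it exceeds all of $\pi^{-1}_{i-1},\ldots$, forcing the stated monotonicity; contrapositively, failure of monotonicity gives count $0$.)

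For the main count, fix $\pi\in\c{S}_n^k$ and process cars $i=1,\ldots,n$ in order. When car $i$ arrives, spots $\{\pi^{-1}_1,\ldots,\pi^{-1}_{i-1}\}$ are occupied and car $i$ must land in spot $s:=\pi^{-1}_i$. A length-$k$ interval $C_i=[a,a+k-1]\subseteq[1,n]$ is valid if and only if $s\in C_i$, spot $s$ is empty (automatic), and every spot in $C_i$ strictly left of $s$ is occupied. The occupied-prefix condition means $a$ can be taken as far left as the start of the maximal run of occupied spots ending at $s-1$; the length of that run is exactly $a_i(\pi)-1$ by definition of $a_i$ (the spots $\pi^{-1}_{i-1},\pi^{-1}_{i-2},\ldots$ immediately preceding value $i$ in $\pi$ are occupied and lie just left of $s$ precisely when the corresponding $\pi$-values are $<\pi_i$, i.e.\ for the first $a_i(\pi)-1$ of them). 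So the admissible left endpoints $a$ form an interval $[\,s-(a_i(\pi)-1),\,s\,]$ intersected with the window $[\,1,\,n-k+1\,]$ forced by $C_i\subseteq[1,n]$. The number of choices is thus $\min\{a_i(\pi),\,s\,,\,n-k+1-s+1+\cdots\}$ type quantity, which I would simplify to $\min\{a_i(\pi),k\}$ when $s\le n-k$ (the right-fit constraint is slack there) and to $\min\{n-s-k+a_i(\pi)+1,\,n-s+1\}$ when $s>n-k$, using that for $\pi\in\c{S}_n^k$ the last $k$ cars satisfy $s=\pi^{-1}_i\ge n-k+1$ exactly for $i\ge n-k+1$. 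Multiplying over $i$ gives the formula.

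The step I expect to be the main obstacle is matching the combinatorial quantity "number of admissible left endpoints of $C_i$" to the closed form in the theorem, particularly the bookkeeping for the last $k$ cars, where both the $C_i\subseteq[1,n]$ truncation on the right and the occupied-prefix length on the left are simultaneously active, and where one must also verify that $i\ge n-k+1 \iff \pi^{-1}_i\ge n-k+1$ under the hypothesis $\pi\in\c{S}_n^k$. I would handle this by splitting into $s\le n-k$ and $s\ge n-k+1$ and, in the latter case, writing the count of endpoints $a$ with $s-(a_i(\pi)-1)\le a\le s$ and $a\le n-k+1$ as $\min\{a_i(\pi),\,n-k+1-(s-a_i(\pi)+1)+1\}=\min\{a_i(\pi),\,n-s-k+a_i(\pi)+1\}$... wait, that needs the further cap $a\ge 1$, i.e.\ $a_i(\pi)\le s$, which I'd note holds automatically, and then observe the right bound $n-s+1$ (total interval positions containing $s$) is the relevant competing term, yielding $\min\{n-i-k+a_i(\pi)+1,n-i+1\}$ after substituting $s=\pi^{-1}_i$ and invoking the index identity. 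A secondary check is the degenerate case $k=n$, where every $C_i=[1,n]$, $\c{S}_n^n=\{\id\}$ forces $\pi=\id$, and the product should collapse to $1$; I would confirm the formula gives this.
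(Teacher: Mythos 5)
Your approach is essentially the paper's: build the configuration car by car and count, for the car landing in spot $s$, the admissible left endpoints of a length-$k$ interval, squeezed between the start of the occupied run ending at $s-1$ (of length $a_s(\pi)-1$) and the cap $a\le n-k+1$ forced by $C_i\sub[n]$. (The paper packages exactly this computation as Lemma~\ref{L-k2} and Theorem~\ref{T-TechI}, then specializes to $K_i=\{k\}$.) Your count of admissible endpoints is correct and reproduces the paper's $b_s(\pi,k)$ in both regimes $s\le n-k$ and $s>n-k$.

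Two points need repair. First, $a_i(\pi)$ is indexed by \emph{position}, so the number of choices for car $c$ is governed by $a_s(\pi)$ with $s=\pi_c^{-1}$, not by $a_c(\pi)$; the product over cars becomes the stated product over positions simply by reindexing $c\mapsto\pi_c^{-1}$. In particular the equivalence you say must be verified, namely $i\ge n-k+1\iff\pi_i^{-1}\ge n-k+1$ for $\pi\in\c{S}_n^k$, is \emph{false}: for $\pi=213$ and $k=2$ we have $\pi\in\c{S}_3^2$, yet car $1$ parks in spot $2=n-k+1$. Fortunately it is also unnecessary --- nothing in the argument requires the cars with the $k$ largest labels to be the ones occupying the last $k$ spots; the split in the final formula is by spot, not by car. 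Second, in the $\pi\notin\c{S}_n^k$ case, the claim that once spots $1,\dots,n-k$ are filled the only length-$k$ interval containing an empty spot is $[n-k+1,n]$ is not right (other length-$k$ intervals can still contain the empty spot $n-k+1$), and the parenthetical reformulation in terms of $\pi_{i}^{-1}$ again confuses labels with positions. The clean statement is positional: for any $j\ge n-k+2$, every length-$k$ interval inside $[n]$ containing $j$ also contains $j-1$, so spot $j-1$ must already be occupied when the car destined for spot $j$ arrives, forcing $\pi_{j-1}<\pi_j$; the contrapositive gives $\IPF_k(n,\pi)=0$. (Equivalently, as in the paper, some $b_{n-i}(\pi,k)$ vanishes.) With these corrections your argument goes through and coincides with the paper's.
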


This formula is rather complicated, but for certain $k$ it is manageable.  For example, when $k=1$ each term in the product is 1.  We conclude that $\IPF_1(n,\pi)=1$ for all $\pi$, and hence $\IPF_1(n)=n!$.  When $k=n$, we have $\IPF_n(n,\pi)=1$ when $\pi=12\cdots n$ (since in general $a_1(\pi)=1$ and $a_{n-i}(\pi)\le n-i$), and otherwise $\IPF_n(n,\pi)=0$, so $\IPF_n(n)=1$.  Both of these results can also be verified directly.  The formulas for $k=n-1$ and $k=2$ are also quite nice.  

\begin{cor}\label{C-n-1}
	If $n\ge 2$ and $\pi\in S^{n-1}_n$ with $\pi_1=j$, then \[\IPF_{n-1}(n,\pi)=\begin{cases*}
	2^{n-j-1}, & $j\ne n$,\\ 
	1 & $j=n$.
	\end{cases*}\]
	Moreover, \[\IPF_{n-1}(n)=2^{n-1}.\]
\end{cor}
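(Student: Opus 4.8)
The plan is to specialize Theorem~\ref{T-kI} to $k=n-1$ and simplify. First I would observe that an $(n-1)$-interval parking function can only have outcome $\pi$ when $\pi\in\c{S}_n^{n-1}$, and that the defining condition $\pi_n>\pi_{n-1}>\cdots>\pi_2$ forces $\pi$ to be completely determined by its first entry $\pi_1=j$: the remaining entries $\pi_2,\ldots,\pi_n$ must list $[n]\sm\{j\}$ in increasing order. In particular $\c{S}_n^{n-1}$ consists of exactly $n$ permutations, one for each $j\in[n]$, which is what I will use for the global count at the end.

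Next I would compute $a_i(\pi)$ for such a $\pi$. Since $a_1(\pi)=1$, the first product $\prod_{i=1}^{n-k}\min\{a_i(\pi),k\}$ of Theorem~\ref{T-kI} (here just the single factor $i=1$) equals $\min\{1,n-1\}=1$. For $2\le i\le n$ the entries $\pi_2<\pi_3<\cdots<\pi_i$ are increasing, so a window ending at position $i$ can always be extended back to position $2$, and it can be extended to include position $1$ precisely when $\pi_i\ge\pi_1=j$, i.e.\ when $\pi_i>j$. Reading off $\pi_2,\ldots,\pi_n=1,2,\ldots,j-1,j+1,\ldots,n$ shows $\pi_i<j$ for $2\le i\le j$ and $\pi_i>j$ for $j<i\le n$, hence $a_i(\pi)=i-1$ for $2\le i\le j$ and $a_i(\pi)=i$ for $j<i\le n$.

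Plugging these into the second product and using $k=n-1$ gives
\[\IPF_{n-1}(n,\pi)=\prod_{i=2}^{n}\min\{a_i(\pi)-i+2,\ n-i+1\}.\]
The factor is $\min\{1,n-i+1\}=1$ for $2\le i\le j$; for $j<i\le n$ it is $\min\{2,n-i+1\}$, which equals $2$ when $j<i\le n-1$ and equals $1$ when $i=n$. Counting the factors equal to $2$ yields $\IPF_{n-1}(n,\pi)=2^{n-j-1}$ when $j\ne n$ (the empty-product convention correctly handling $j=n-1$, where there are no factors equal to $2$), and $\IPF_{n-1}(n,\pi)=1$ when $j=n$, which is the claimed case distinction.

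Finally, summing over the $n$ permutations in $\c{S}_n^{n-1}$ indexed by $j=\pi_1$,
\[\IPF_{n-1}(n)=\sum_{j=1}^{n-1}2^{n-j-1}+1=(2^{n-1}-1)+1=2^{n-1}.\]
There is no serious obstacle here; the only place requiring care is the bookkeeping in the boundary cases ($i=n$, $j\in\{1,n\}$, and $n=2$) and in extracting $a_i(\pi)$ directly from its definition, which I would state explicitly before substituting.
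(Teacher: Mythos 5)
Your proposal is correct and follows essentially the same route as the paper: specialize Theorem~\ref{T-kI} to $k=n-1$, use the fact that $\pi\in\c{S}_n^{n-1}$ is determined by $\pi_1=j$ to compute $a_i(\pi)\in\{i-1,i\}$ explicitly, read off which factors equal $2$, and sum the geometric series over $j$. The only cosmetic difference is that the paper checks $n=2$ separately and drops the $i=n$ factor before simplifying, whereas you keep that factor and observe it equals $\min\{2,1\}=1$, which handles the boundary uniformly.
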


To state the formula for $k=2$, we define the ascent number $\asc(\pi)$ of a permutation $\pi$ to be the number of $i$ with $2\le i\le n$ and $\pi_{i-1}<\pi_i$.  Define the Eulerian number $\Eul{n}{k}$ to be the number of permutations $\pi\in \c{S}_n$ with $\asc(\pi)=k$.

\begin{cor}\label{C-2}
	If $n\ge 2$ and $\pi\in S^2_n$, then \[\IPF_2(n,\pi)=2^{\asc(\pi)-1}.\]
	Moreover,
	\[\IPF_2(n)=\sum_{k=1}^{n-1} (n-k)\Eul{n-1}{k-1}2^{k-1}.\]
\end{cor}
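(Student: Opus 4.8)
The first identity follows by specializing Theorem~\ref{T-kI} to $k=2$. If $\pi\notin \c{S}_n^2$ both sides are $0$, so assume $\pi\in \c{S}_n^2$, i.e.\ $\pi_{n-1}<\pi_n$. In Theorem~\ref{T-kI} the first product becomes $\prod_{i=1}^{n-2}\min\{a_i(\pi),2\}$, and the two factors of the second product, for $i=n-1$ and $i=n$, simplify to $\min\{a_{n-1}(\pi),2\}$ and $\min\{a_n(\pi)-1,1\}$; the latter equals $1$ because $\pi_{n-1}<\pi_n$ forces $a_n(\pi)\ge 2$. Hence $\IPF_2(n,\pi)=\prod_{i=1}^{n-1}\min\{a_i(\pi),2\}$. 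Now $a_1(\pi)=1$, and for $2\le i\le n-1$ we have $a_i(\pi)\ge 2$ exactly when $\pi_{i-1}<\pi_i$, so this product equals $2^{m}$ with $m=\bigl|\{\,2\le i\le n-1:\pi_{i-1}<\pi_i\,\}\bigr|$. Since the final pair $(\pi_{n-1},\pi_n)$ is an ascent, $m=\asc(\pi)-1$, and we obtain $\IPF_2(n,\pi)=2^{\asc(\pi)-1}$.

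For the total, Theorem~\ref{T-kI} together with the formula just proved gives
\[\IPF_2(n)=\sum_{\pi\in \c{S}_n^2}2^{\asc(\pi)-1}=\sum_{k=1}^{n-1}2^{k-1}B(n,k),\qquad B(n,k):=\bigl|\{\,\pi\in \c{S}_n:\asc(\pi)=k,\ \pi_{n-1}<\pi_n\,\}\bigr|,\]
the sum running over $1\le k\le n-1$ because a permutation ending in an ascent has between $1$ and $n-1$ ascents. Thus the corollary reduces to the identity $B(n,k)=(n-k)\Eul{n-1}{k-1}$ for $1\le k\le n-1$, from which the stated formula for $\IPF_2(n)$ is immediate.

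I would prove this identity by induction on $n$, building a permutation of $[n]$ by inserting the letter $n$ into one of the $n$ gaps of a permutation $\tau\in \c{S}_{n-1}$ and tracking simultaneously (i) the change in the number of ascents and (ii) whether the last pair of the result is an ascent. Inserting $n$ at the end always yields a permutation that ends in an ascent and has one more ascent than $\tau$; inserting $n$ in the second-to-last gap always yields a permutation ending in a descent; and inserting $n$ in any earlier gap leaves the last pair unchanged from $\tau$, so it is an ascent iff $\tau$ ends in an ascent, while the ascent count stays the same if $n$ is placed before $\tau_1$ or inside an interior ascent of $\tau$, and increases by one if $n$ is placed inside an interior descent. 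Counting these gaps --- using that when $\tau$ ends in an ascent exactly one of its ascents and none of its descents is its final pair --- gives the recurrence
\[B(n,k)=\Eul{n-1}{k-1}+k\,B(n-1,k)+(n-1-k)\,B(n-1,k-1),\qquad n\ge 3,\]
with base case $B(2,1)=1$. It then remains to check that $(n-k)\Eul{n-1}{k-1}$ satisfies this recurrence, which is a short manipulation: substitute the induction hypothesis into $B(n-1,k)$ and $B(n-1,k-1)$, expand $\Eul{n-1}{k-1}$ via the classical Eulerian recurrence $\Eul{m}{j}=(j+1)\Eul{m-1}{j}+(m-j)\Eul{m-1}{j-1}$, and collect terms.

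The main obstacle is this recurrence: the count of gaps of $\tau$ that preserve or raise the ascent number depends on whether $\tau$'s own last pair is an ascent, which is precisely the condition defining $B(n-1,\cdot)$, so the two statistics must be tracked in tandem. An alternative reduction is to delete the last letter of $\pi$: sending $\pi$ to the pair consisting of $\pi_n$ and the standardization $\sigma\in \c{S}_{n-1}$ of $\pi_1\cdots\pi_{n-1}$ shows $B(n,k)=\sum_{\sigma}(n-\sigma_{n-1})$, the sum over $\sigma\in \c{S}_{n-1}$ with exactly $k-1$ ascents, so the identity becomes the statement that the average last entry of such a $\sigma$ is $k$; this is clean to state but seems to require comparable effort to prove.
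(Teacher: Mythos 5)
Your proposal is correct. The first half (specializing Theorem~\ref{T-kI} to $k=2$, checking that the $i=n$ factor is $1$ and that the surviving factors contribute a $2$ precisely at the non-final ascents) is exactly the paper's argument, and your reduction of the total count to the identity $B(n,k)=(n-k)\Eul{n-1}{k-1}$ is the same reduction the paper makes — that identity is precisely Lemma~\ref{L-Count}. Where you genuinely diverge is in the proof of that counting lemma: the paper deletes the letter $1$ from $\pi\in\c{S}_{n,k}^+$, which forces it to track the auxiliary quantity $M(n,k)$ (permutations ending in a descent) because one of the three image cases of the deletion map lands in $\c{S}_{n-1,k-1}^-$, and $M$ is then eliminated via $P+M=\Eul{n}{k}$. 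Your insertion-of-$n$ argument instead yields the self-contained recurrence $B(n,k)=\Eul{n-1}{k-1}+kB(n-1,k)+(n-1-k)B(n-1,k-1)$, since inserting the maximum at the end contributes a plain Eulerian number and the remaining gaps preserve the ``ends in an ascent'' property; together with the classical recurrence $\Eul{n-1}{k-1}=k\Eul{n-2}{k-1}+(n-k)\Eul{n-2}{k-2}$ this closes the induction cleanly (I checked the algebra: the right-hand side collapses to $(1+(n-1-k))\Eul{n-1}{k-1}=(n-k)\Eul{n-1}{k-1}$). Your version avoids the auxiliary statistic entirely and is arguably tidier; the paper's version has the mild advantage of reusing its map $\phi$ and the $P/M$ dichotomy elsewhere in the threshold-graph context it cites.
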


\section{Subset Results}\label{S-Sub}
We first prove enumeration results for a generalization of subset parking functions where each car is given a list of allowed subset sizes.  To this end, given $\c{L}=(L_1,\ldots,L_n)$ with $L_i\sub [n]$, we define $\SP(n,\c{L})$ to be the number of subset parking functions where $|C_i|\in L_i$, and we will call this an $\c{L}$-parking function.  Our first goal will be to enumerate $\SP(n,\c{L})$.

To do this, we define the notion of a partial parking function, which intuitively describes where the first $m$ cars have parked.  Let $S_{m,n}$ denote the set of strings $\pi=\pi_1\cdots \pi_n$ where for all $1\le i\le m$ there exists a unique index $j$ with $\pi_j=i$ and such that every other letter is an auxillary letter $*$.  Note that $S_{n,n}$ is simply the set of permutations.  For $i\le m$ we let $\pi_i^{-1}$ denote the unique index $j$ with $\pi_j=i$.  

We say that $(C_1,\ldots,C_m)$ with each $C_i$ a non-empty subset of $[n]$ is a partial $\c{L}$-parking function with outcome $\pi\in S_{m,n}$ if for all $1\le i\le m$, $|C_i|\in L_i$ and $\pi_i^{-1}=j$ implies $j$ is the smallest element of $C_i\sm\{\pi_{i'}^{-1}:i'<i\}$.  Finally, given a permutation $\pi$, we write $\pi^{(m)}$ to denote the string where $\pi_i^{(m)}=\pi_i$ if $i\le m$ and $\pi_i^{(m)}=*$ otherwise.  Once one unpacks these definitions, the following is immediate.

\begin{lem}\label{L-PP1}
	Let $1\le m\le n$ and $\pi\in \c{S}_n$.  $\c{C}$ is an $\c{L}$-parking function with outcome $\pi$ if and only if $(C_1,\ldots,C_m)$ is a partial $\c{L}$-parking function with outcome $\pi^{(m)}$ for all $m$.
\end{lem}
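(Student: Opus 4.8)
The plan is to observe that the defining property of a partial $\c{L}$-parking function on a prefix $(C_1,\ldots,C_m)$ is literally the restriction, to the indices $i\le m$, of the defining property of an $\c{L}$-parking function; once this is seen, the lemma is just a bookkeeping statement about the auxiliary string $\pi^{(m)}$.

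The one point to record carefully is that truncation does not move inverse positions. Since $\pi^{(m)}$ is obtained from $\pi$ by replacing every entry exceeding $m$ with $*$, for each $i\le m$ the unique index $j$ with $\pi^{(m)}_j=i$ equals the unique index $j$ with $\pi_j=i$; that is, $(\pi^{(m)})_i^{-1}=\pi_i^{-1}$ whenever $i\le m$. In particular, for $i\le m$ the set $\{\pi_{i'}^{-1}:i'<i\}$ is the same whether it is computed inside $\pi$ or inside $\pi^{(m)}$, because the relevant indices $i'$ satisfy $i'<i\le m$ and so are themselves covered by this identification.

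With that in hand I would just compare the two definitions. Unwinding, $(C_1,\ldots,C_m)$ is a partial $\c{L}$-parking function with outcome $\pi^{(m)}$ exactly when, for every $i$ with $1\le i\le m$, the set $C_i$ is a non-empty subset of $[n]$ with $|C_i|\in L_i$ and $\pi_i^{-1}$ is the smallest element of $C_i\sm\{\pi_{i'}^{-1}:i'<i\}$ — and by the previous paragraph this last clause may be read inside $\pi$ or inside $\pi^{(m)}$ interchangeably. This is precisely the list of conditions in the definition of an $\c{L}$-parking function $\c{C}$ with outcome $\pi$, there imposed for all $i$ with $1\le i\le n$. Hence ``$\c{C}$ is an $\c{L}$-parking function with outcome $\pi$'' says the displayed condition holds for every $i\le n$, while ``$(C_1,\ldots,C_m)$ is a partial $\c{L}$-parking function with outcome $\pi^{(m)}$ for all $m$'' says that for every $m$ it holds for every $i\le m$; the two are equivalent, one direction by taking $m=n$ (noting $\pi^{(n)}=\pi$), the other because $1\le i\le n$ is the same as $1\le i\le m$ when $m=i$.

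There is no real obstacle: the prefix condition mentions no $C_i$ with $i>m$ and no inverse position $\pi_{i'}^{-1}$ with $i'>m$, so nothing is lost or gained when passing between $\pi$ and $\pi^{(m)}$, and in particular no induction is required.
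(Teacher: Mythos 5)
Your proposal is correct and matches the paper, which simply asserts the lemma is ``immediate'' once the definitions are unpacked; you have carried out that unpacking explicitly, including the key observation that $(\pi^{(m)})_i^{-1}=\pi_i^{-1}$ for $i\le m$ so the prefix condition is literally the restriction of the full condition to $i\le m$.
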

The following lemma shows how to extend partial parking functions.

\begin{lem}\label{L-PP2}
	Let $1\le m\le n$ and $\pi\in \c{S}_n$. If $(C_1,\ldots,C_{m-1})$ is a partial $\c{L}$-parking function with outcome $\pi$, then $(C_1,\ldots,C_{m})$ is a partial $\c{L}$-parking function if and only if $|C_{m}|\in L_{m}$ and $C_{m}\not \subset \{\pi_{i}^{-1}:i<m\}$.
\end{lem}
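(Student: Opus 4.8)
The plan is to pin down the fact that the outcome of a partial $\c{L}$-parking function is completely determined by the tuple of sets, and then read off both implications by, respectively, forgetting and adding the last coordinate. First I would record this determinacy statement: if $(C_1,\ldots,C_{m-1})$ is a partial $\c{L}$-parking function with outcome $\tau$, then by definition $\tau_i^{-1}=\min\l(C_i\sm\{\tau_{i'}^{-1}:i'<i\}\r)$ for every $i\le m-1$, so a trivial induction on $i$ shows that each $\tau_i^{-1}$, and hence the set $T:=\{\pi_i^{-1}:i<m\}=\{\tau_i^{-1}:i<m\}$, depends only on $(C_1,\ldots,C_{m-1})$. In particular the outcome $\pi^{(m-1)}$ appearing in the hypothesis is the unique outcome of $(C_1,\ldots,C_{m-1})$.

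For the forward implication, suppose $(C_1,\ldots,C_m)$ is a partial $\c{L}$-parking function with outcome $\sig\in S_{m,n}$. Then $|C_m|\in L_m$ directly from the definition. Let $\tau\in S_{m-1,n}$ be the string obtained from $\sig$ by replacing the letter $m$ with $*$; restricting the defining conditions to indices $i\le m-1$ (where $\sig$ and $\tau$ have the same positions for the letters $1,\ldots,m-1$) shows that $(C_1,\ldots,C_{m-1})$ is a partial $\c{L}$-parking function with outcome $\tau$, so by the determinacy step $\tau=\pi^{(m-1)}$ and $\{\sig_{i'}^{-1}:i'<m\}=T$. Since $\sig_m^{-1}$ is the minimum of $C_m\sm\{\sig_{i'}^{-1}:i'<m\}=C_m\sm T$, that set is non-empty, i.e.\ $C_m\not\sub T$, as claimed.

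For the reverse implication, assume $|C_m|\in L_m$ and $C_m\not\sub T$, put $j:=\min(C_m\sm T)$ (well defined by assumption, and $j\notin T$), and let $\sig\in S_{m,n}$ be the string with $\sig_{\pi_i^{-1}}=i$ for $1\le i\le m-1$, $\sig_j=m$, and all other entries $*$ (this genuinely lies in $S_{m,n}$ precisely because $j\notin T$). I would then verify the definition of a partial $\c{L}$-parking function for $(C_1,\ldots,C_m)$ with outcome $\sig$: for $i\le m-1$ we have $\sig_i^{-1}=\pi_i^{-1}$ and $\{\sig_{i'}^{-1}:i'<i\}=\{\pi_{i'}^{-1}:i'<i\}$, so both $|C_i|\in L_i$ and $\sig_i^{-1}=\min(C_i\sm\{\sig_{i'}^{-1}:i'<i\})$ are inherited verbatim from the hypothesis on $(C_1,\ldots,C_{m-1})$; and for $i=m$ we have $|C_m|\in L_m$ by assumption and $\sig_m^{-1}=j=\min(C_m\sm T)=\min(C_m\sm\{\sig_{i'}^{-1}:i'<m\})$ by the choice of $j$.

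There is really no serious obstacle here: the only non-bookkeeping ingredient is the one-line determinacy induction of the first paragraph, and the rest amounts to being careful about the $*$-strings and about the index sets $\{\pi_i^{-1}:i<m\}$ and $\{\sig_{i'}^{-1}:i'<m\}$ — in particular, making sure the $\sig$ built in the reverse direction really is an element of $S_{m,n}$, which is exactly what the hypothesis $C_m\not\sub T$ guarantees.
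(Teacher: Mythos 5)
Your proof is correct and follows essentially the same route as the paper: the reverse direction constructs the extended outcome by placing $m$ at $\min(C_m\sm T)$, and the forward direction reads off non-emptiness of $C_m\sm T$ from the definition. The only difference is that you make explicit the determinacy of the outcome (that $\{\sig_{i'}^{-1}:i'<m\}=\{\pi_i^{-1}:i<m\}$), which the paper's terse proof uses implicitly; this is a welcome bit of extra care but not a different argument.
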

\begin{proof}
	If $C_{m}$ is such a set, then by assumption $|C_{m}|\in L_{m}$ and there exists some minimal $j$ in $C_{m}\sm \{\pi_{k}^{-1}:k<m\}$.  Thus by defining $\pi'$ by $\pi'_i=\pi_i$ for $i\ne j$ and $\pi'_j=m$, we see that $(C_1,\ldots,C_{m})$ is a partial $\c{L}$-parking function with outcome $\pi'$.  Conversely, if $(C_1,\ldots,C_{m})$ is a partial $\c{L}$-parking function, then $C_{m}\sm \{\pi_{k}^{-1}:k<m\}$ must be non-empty, so $C_{m}\not \subset \{\pi_{k}^{-1}:k<m\}$.  We also must have $|C_{m}|\in L_{m}$ by definition, proving the result.
\end{proof}

\begin{thm}\label{T-Tech}
	For any $n\ge 1$ and $\c{L}=(L_1,\ldots,L_n)$,
	\[\SP(n,\c{L})=\prod_{i=1}^n\l(\sum_{\ell \in L_i} {n\choose \ell}-{i-1\choose \ell}\r).\]
\end{thm}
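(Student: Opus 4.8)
The plan is to enumerate $\c{L}$-parking functions by adding one car at a time, using Lemmas~\ref{L-PP1} and~\ref{L-PP2} as the engine. For $0\le m\le n$ let $N_m$ be the total number of partial $\c{L}$-parking functions $(C_1,\ldots,C_m)$, ranging over all outcomes $\pi\in S_{m,n}$, with the convention that $N_0=1$ counts the empty tuple. Since $\pi^{(n)}=\pi$ and a partial $\c{L}$-parking function of length $n$ is exactly an $\c{L}$-parking function, Lemma~\ref{L-PP1} gives $\SP(n,\c{L})=N_n$. So it suffices to prove, by induction on $m$, that
\[
N_m=\prod_{i=1}^m\left(\sum_{\ell\in L_i}\binom{n}{\ell}-\binom{i-1}{\ell}\right),
\]
the base case $m=0$ being the empty product.

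For the inductive step I would use that every partial $\c{L}$-parking function $(C_1,\ldots,C_m)$ restricts to the partial $\c{L}$-parking function $(C_1,\ldots,C_{m-1})$ (immediate from the definition), so $N_m$ is obtained by summing, over all partial $\c{L}$-parking functions $(C_1,\ldots,C_{m-1})$, the number of admissible choices of $C_m$. Lemma~\ref{L-PP2} identifies these admissible choices precisely: given the outcome $\pi$ of $(C_1,\ldots,C_{m-1})$, they are the sets $C_m\subseteq[n]$ with $|C_m|\in L_m$ and $C_m\not\subseteq\{\pi_k^{-1}:k<m\}$. The key observation is that the forbidden set $\{\pi_k^{-1}:k<m\}$ always has exactly $m-1$ elements, independently of which $(C_1,\ldots,C_{m-1})$ we started from; hence the number of admissible $C_m$ is
\[
\sum_{\ell\in L_m}\left(\binom{n}{\ell}-\binom{m-1}{\ell}\right),
\]
since of the $\binom{n}{\ell}$ size-$\ell$ subsets of $[n]$ exactly $\binom{m-1}{\ell}$ lie inside the $(m-1)$-element forbidden set. (This also respects the requirement that each $C_i$ be nonempty: if $0\in L_i$ the corresponding term equals $\binom{n}{0}-\binom{i-1}{0}=0$ for every $i\ge 1$.) Since this count does not depend on the particular $(C_1,\ldots,C_{m-1})$, the multiplication principle gives $N_m=N_{m-1}\cdot\sum_{\ell\in L_m}\big(\binom{n}{\ell}-\binom{m-1}{\ell}\big)$, and the induction closes.

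I do not anticipate a genuine obstacle: the whole argument rests on the single fact that the number of ways to seat one more car depends only on how many spots are already occupied and not on which ones, which is exactly what Lemma~\ref{L-PP2} provides. The only points demanding care are the bookkeeping in the base case and checking that the truncation map from length-$m$ to length-$(m-1)$ partial $\c{L}$-parking functions is well defined, with fibers described by Lemma~\ref{L-PP2} — both routine.
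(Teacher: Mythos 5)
Your proposal is correct and follows essentially the same route as the paper: build the tuple one car at a time, use Lemma~\ref{L-PP1} to identify $\c{L}$-parking functions with maximal chains of partial ones, and use Lemma~\ref{L-PP2} together with the fact that exactly $i-1$ spots are occupied at stage $i$ to count the $\sum_{\ell\in L_i}\binom{n}{\ell}-\binom{i-1}{\ell}$ choices at each step. Your explicit induction on $m$ is just a slightly more formal packaging of the paper's multiplication-principle argument.
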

\begin{proof}
	Consider the following procedure.  We start with an empty list $()$.  Recursively, given a partial $\c{L}$-parking function $(C_1,\ldots,C_{i-1})$, we choose a set $C_i$ such that $(C_1,\ldots,C_{i})$ is a partial $\c{L}$-parking function.  By Lemma~\ref{L-PP1}, every $\c{L}$-parking function is obtained (uniquely) by this procedure.  Thus to obtain our result we need only enumerate how many choices we can make at each stage of the procedure.
	
	Assume one has already chosen $(C_1,\ldots,C_{i-1})$ so now we need to choose $C_i$.  By Lemma~\ref{L-PP2}, for any $\ell\in L_i$, the number of ways to choose an appropriate $C_i$ with $|C_i|=\ell$ is ${n\choose \ell}-{i-1\choose \ell}$.  Namely, one can choose any $\ell$-element subset that is not contained in $\{\pi_j^{-1}:j<i\}$.  As we allow $|C_i|$ to be any element of $L_i$, we conclude that the number of choices for $C_i$ is exactly $\sum_{\ell \in L_i} {n\choose \ell}-{i-1\choose \ell}$.   As the number of choices for $C_i$ is independent of all of the other $C_j$ sets, we conclude that the total number of ways to complete this procedure is the product of all of these sums.  This gives the desired result.
\end{proof}

We can prove a similar general theorem when the outcome is specified.  To this end, define $\SP(n,\c{L},\pi)$ to be the number of $\c{L}$-parking functions with outcome $\pi$.  Recall that $\inv_i(\pi)$ is defined to be the number of $(i,j)$ with $i<j$ and $\pi_j<\pi_i$.

\begin{thm}\label{T-TechPi}
	For any $n\ge 1$, $\pi\in \c{S}_n$, and $\c{L}=(L_1,\ldots,L_n)$,
	\[\SP(n,\c{L},\pi)=\prod_{i=1}^n\l(\sum_{\ell \in L_i} {n-\inv_i(\pi)-1\choose \ell-1}\r).\]
\end{thm}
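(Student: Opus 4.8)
The plan is to run the ``build up $C_1,C_2,\ldots$ one car at a time'' argument used for Theorem~\ref{T-Tech}, but now keeping track of the outcome. First I would record the outcome‑aware refinement of Lemma~\ref{L-PP2}: if $(C_1,\ldots,C_{m-1})$ is a partial $\c{L}$-parking function with outcome $\pi^{(m-1)}$, then $(C_1,\ldots,C_m)$ is a partial $\c{L}$-parking function with outcome $\pi^{(m)}$ if and only if $|C_m|\in L_m$, $\pi_m^{-1}\in C_m$, and $C_m\cap[1,\pi_m^{-1}-1]\sub\{\pi_i^{-1}:i<m\}$. This is immediate from unpacking the definition of a partial $\c{L}$-parking function: car $m$ parks in the smallest available acceptable spot, so demanding that this be $\pi_m^{-1}$ says exactly that $\pi_m^{-1}$ is acceptable to car $m$ and that every acceptable spot to its left has already been taken; note that $\pi_m^{-1}$ itself is free at this stage because $\pi$ is a genuine permutation, so $\pi_1^{-1},\ldots,\pi_{m-1}^{-1}$ are all distinct from it.

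By Lemma~\ref{L-PP1}, every $\c{L}$-parking function with outcome $\pi$ is produced exactly once by the procedure that starts from the empty list $()$ and, at stage $m$, chooses a set $C_m$ meeting the three conditions above. Exactly as in the proof of Theorem~\ref{T-Tech}, the number of legal choices for $C_m$ depends only on $\pi$ and $m$ and not on $C_1,\ldots,C_{m-1}$, so $\SP(n,\c{L},\pi)$ equals the product over $m=1,\ldots,n$ of these counts; it therefore remains only to count, for each $\ell\in L_m$, the legal $C_m$ with $|C_m|=\ell$.

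Fix $m$ and write $j=\pi_m^{-1}$. Let $O=\{\pi_i^{-1}:i<m\}\cap[1,j-1]$ be the set of occupied spots lying to the left of $j$, and put $o=|O|$. By the previous paragraph a legal $C_m$ of size $\ell$ is precisely $\{j\}$ together with an arbitrary subset of $O$ and an arbitrary subset of $\{j+1,\ldots,n\}$ whose sizes sum to $\ell-1$, so Vandermonde's identity gives
\[\sum_{a\ge 0}\binom{o}{a}\binom{n-j}{\ell-1-a}=\binom{o+n-j}{\ell-1}\]
legal sets of size $\ell$. It remains to evaluate $o$. Of the $j-1$ spots to the left of $j$, the occupied ones are exactly those holding a car with label $<m$ in the final configuration $\pi$, so the other $j-1-o$ of them are exactly the spots holding a car with label $>m$; equivalently, $j-1-o$ is the number of inversions of $\pi$ whose smaller entry is $m$, which is $\inv_m(\pi)$. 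Hence $o+n-j=(j-1)-\inv_m(\pi)+n-j=n-\inv_m(\pi)-1$, the number of legal $C_m$ is $\sum_{\ell\in L_m}\binom{n-\inv_m(\pi)-1}{\ell-1}$, and multiplying over $m$ gives the claim. (As a check, summing $j-1-o$ over $m$ recovers $\inv(\pi)$, consistent with $\inv(\pi)=\sum_m\inv_m(\pi)$.)

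The only step that is not entirely routine is the last identification, namely that the number of already‑occupied spots to the left of car $m$'s parking spot is $(\pi_m^{-1}-1)-\inv_m(\pi)$; everything else is a direct adaptation of the proof of Theorem~\ref{T-Tech} plus one application of Vandermonde. I would also point out that Theorem~\ref{T-SP} and Theorem~\ref{T-k} (first displayed equations) are the special cases $\c{L}=([n],\ldots,[n])$ and $\c{L}=(\{k\},\ldots,\{k\})$ of this result — for the former, $\sum_{\ell=1}^{n}\binom{n-\inv_i(\pi)-1}{\ell-1}=2^{\,n-\inv_i(\pi)-1}$, whose product over $i$ is $2^{n(n-1)-\inv(\pi)}$.
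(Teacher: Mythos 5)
Your proposal is correct and follows essentially the same route as the paper: build the sets $C_1,\ldots,C_n$ one at a time via the partial-parking-function lemmas, observe that a legal $C_m$ must contain $j=\pi_m^{-1}$ and may otherwise use only already-occupied spots to the left of $j$ plus arbitrary spots to the right, and count these as $\binom{n-\inv_m(\pi)-1}{\ell-1}$. The only cosmetic difference is your Vandermonde detour (the paper just picks $\ell-1$ elements directly from the $o+n-j$ allowed non-$j$ spots), and your explicit identification of $\inv_m(\pi)$ as the number of inversions with smaller entry $m$ matches exactly how the paper uses that statistic in its own proof.
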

\begin{proof}
	Consider the following procedure.  We start with an empty list $()$.  Recursively, given a partial $\c{L}$-parking function $(C_1,\ldots,C_{i-1})$ with outcome $\pi^{(i-1)}$, we choose a set $C_i$ such that $(C_1,\ldots,C_{i})$ is a partial $\c{L}$-parking function with outcome $\pi^{(i)}$.  By Lemma~\ref{L-PP1}, every $\c{L}$-parking function with outcome $\pi$ is obtained (uniquely) by this procedure.  Thus to obtain our result we need only enumerate how many choices we can make at each stage of the procedure.
	
	Assume $(C_1,\ldots,C_{i-1})$ is a partial $\c{L}$-parking function with outcome $\pi^{(i-1)}$.  By Lemma~\ref{L-PP2}, if we wish to have $|C_i|=\ell\in L_i$, then we must have $C_{i}\not \subset \{\pi_{i'}^{-1}:i'<i\}$.  Moreover, we also must choose this set so that it has outcome $\pi^{(i)}$.  If $j=\pi_i^{-1}$, this is equivalent to having $C_{i}$ be any subset with $j$ the minimal element of $C_{i}\sm \{\pi_{i'}^{-1}:i'<i\}$.  To summarize, necessary and sufficient conditions for $C_i$ to have $|C_i|=\ell$ are
	\begin{itemize}
		\item[(a)] $|C_i|=\ell$, 
		\item[(b)] $C_i\not\subset\{\pi_{i'}^{-1}:i'<i\}$,
		\item[(c)] $j\in C_i$, and
		\item[(d)] $k\notin C_i$ if $k<j$ and $k\notin \{\pi_{i'}^{-1}:i'<i\}$.

	\end{itemize}  Note that (b) is implied by (c), so this is irrelevant.  Condition (d) is equivalent to avoiding $k$ with $k<j$ and $i<\pi_k$ (that is, the car that appears in the earlier spot $k$ parks after $i$).  The number of such $k$ is exactly $\inv_i(\pi)$, so we conclude that the number of $C_i$ satisfying these conditions is exactly ${n-1-\inv_i(\pi)\choose \ell-1}$.   Summing this value over all $\ell\in L_i$ gives the total number of choices for $C_i$.  As this quantity is independent of all the other choices of $C_j$, we can take their product to arrive at the desired count for $\SP(n,\c{L},\pi)$.
\end{proof}

With this we can now prove our results.  We start with Theorem~\ref{T-k}.

\begin{proof}[Proof of Theorem~\ref{T-k}]
	Note that $k$-subset parking functions are precisely $\c{L}$-parking functions where $L_i=\{k\}$ for all $i$.  The result follows from Theorems~\ref{T-Tech} and \ref{T-TechPi}.
\end{proof}

\begin{proof}[Proof of Theorem~\ref{T-SP}]
	Subset parking functions are precisely $\c{L}$-parking functions where $L_i=[n]\sm \{0\}$ for all $i$.  By Theorem~\ref{T-Tech} we have \[\SP(n)=\prod_{i=1}^n \l((2^n-1)-(2^{i-1}-1)\r).\]  Canceling the 1's and reindexing the product gives the first result.  For the second result, Theorem~\ref{T-TechPi} implies \[\SP(n,\pi)=\prod_{i=1}^n 2^{n-\inv_i(\pi)-1}=2^{n(n-1)-\inv(\pi)},\]
	where we used that $\inv(\pi)=\sum \inv_i(\pi)$.
\end{proof}

We note that Theorem~\ref{T-SP} implies \[\sum_{\pi \in \c{S}_n}2^{n(n-1)-\inv(\pi)}=\prod_{i=0}^{n-1} (2^n-2^i),\]
which one can verify using the generating function for the inversion statistic.  This also provides an alternative way to prove the formula for $\SP(n)$ given the formulas for each $\SP(n,\pi)$.  Similarly Theorem~\ref{T-k} implies 
\[
	\sum_{\pi \in \c{S}_n} \prod_{i=1}^n {n-\inv_i(\pi)-1\choose k-1}=\prod_{i=1}\l({n\choose k}-{i-1\choose k}\r).
\]
We are not aware of a more direct method to prove this.

Before closing this section, we briefly discuss a variant of subset parking functions.   Since subset parking functions allow each car to have any set of positions be acceptable, it also makes sense to allow each car to have their own preference order for these spots insteadof always requiring them to park in the left-most available spot.  

To formalize this, we say that a list of subsets $\c{C}=(C_1,\ldots,C_n)$, together with a list of bijections $f_i:C_i\to [|C_i|]$, is an ordered parking function with outcome $\pi=\pi_1\cdots \pi_n$ if for all $1\le i\le n$, $\pi_j=i$ implies $f_i(j)=\min_{j'\in D_i} f_i(j')$, where $D_i:=C_i\sm\{\pi_{i'}^{-1}:i'<i\}$.  We let $\OPF(n,\pi)$ denote the number of ordered parking functions with outcome $\pi$ and $\OPF(n):=\sum \OPF(n,\pi)$ the number of ordered parking functions.

If we define $\c{L}$-ordered parking functions analogous to how we defined $\c{L}$-parking functions, then essentially the same proof used to prove Theorem~\ref{T-Tech} shows that
\[
\OPF(n,\c{L})=\prod_{i=1}^n\l(\sum_{\ell \in L_i} {n\choose \ell}\ell!-{i-1\choose \ell}\ell!\r).
\] 
With this established, one can prove a nice analog of Theorem~\ref{T-SP}.  Namely, define $\c{O}(n)=\sum_{\ell=0}^n {n\choose \ell}\ell!$ to be the number of ordered subset of $[n]$.  Then
\[
\OPF(n)=\prod_{i=0}^{n-1}(\c{O}(n)-\c{O}(i)).
\]
In the ordered setting, every $\pi$ is equally likely to be the outcome of an ordered parking function, so $\OPF(n,\pi)=\OPF(n)/n!$ for all $\pi$.
\section{Interval Results}\label{S-Int}
As before we first prove a more general theorem.  Let $\c{K}=(K_1,\ldots,K_n)$ be such that $K_i\sub [n]\sm \{0\}$ for all $i$. We say that $(C_1,\ldots,C_n)$ is a $\c{K}$-interval parking function if each $C_i$ is an interval with $|C_i|\in K_i$.  We define partial $\c{K}$-interval parking functions analogous to how we defined partial $\c{L}$-parking functions in the previous section, and as before we immediately have the following.
\begin{lem}\label{L-k1}
	Let $1\le m\le n$ and $\pi \in \c{S}_n$.  $\c{C}$ is a $\c{K}$-interval parking function with outcome $\pi$ if and only if $(C_1,\ldots,C_m)$ is a partial $\c{K}$-interval parking function with outcome $\pi^{(m)}$ for all $m$.
\end{lem}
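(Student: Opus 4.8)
The plan is to prove Lemma~\ref{L-k1} by simply unwinding the definitions, in exact parallel with Lemma~\ref{L-PP1}; the single conceptual point to isolate is that the constraint imposed by the definition on the $i$th car---namely that $C_i$ be an interval with $|C_i|\in K_i$ and that $\pi_i^{-1}$ be the smallest element of $C_i\sm\{\pi_{i'}^{-1}:i'<i\}$---refers only to $C_1,\ldots,C_i$ and to the parking spots $\pi_1^{-1},\ldots,\pi_{i-1}^{-1}$ of the strictly earlier cars. In other words, the defining conditions are local in the index $i$, and the interval/size restrictions are inherited termwise. Everything else is bookkeeping.

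First I would handle the backward implication by taking $m=n$: since $\pi^{(n)}=\pi$, and a partial $\c{K}$-interval parking function on all $n$ sets with outcome $\pi$ is, termwise, precisely the definition of a $\c{K}$-interval parking function with outcome $\pi$, there is nothing to check. Then, for the forward implication, I would fix $\c{C}=(C_1,\ldots,C_n)$ a $\c{K}$-interval parking function with outcome $\pi$ and an arbitrary $m\le n$. Each $C_i$ with $i\le m$ is an interval with $|C_i|\in K_i$, and by the definition of ``outcome $\pi$'' we have, for every such $i$, that $\pi_i^{-1}$ is the smallest element of $C_i\sm\{\pi_{i'}^{-1}:i'<i\}$; all indices $i'$ occurring here satisfy $i'<i\le m$, so these are exactly the conditions required for $(C_1,\ldots,C_m)$ to be a partial $\c{K}$-interval parking function. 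Finally I would observe that the outcome of $(C_1,\ldots,C_m)$ places car $i$ in spot $\pi_i^{-1}$ for each $i\le m$ and leaves all other spots empty, which is by definition the string $\pi^{(m)}\in S_{m,n}$.

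I do not expect a genuine obstacle here; the only step needing a moment's care is confirming that the outcome of the truncated list is literally $\pi^{(m)}$ rather than some other member of $S_{m,n}$, which follows at once from the fact that cars $1,\ldots,m$ occupy the same spots in the truncated list as in $\c{C}$ (their parking rule never consults $C_j$ for $j>m$) and that $S_{m,n}$ records the remaining positions with the auxiliary symbol $*$. Since this is the interval analogue of Lemma~\ref{L-PP1}, with the added conditions ``interval'' and ``$|C_i|\in K_i$'' carried along verbatim and playing no role in the argument, I would state the proof as essentially immediate.
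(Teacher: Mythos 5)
Your proof is correct and matches the paper's intent exactly: the paper states this lemma without proof as an immediate consequence of unpacking the definitions (just as with Lemma~\ref{L-PP1}), and your argument is precisely that unpacking, noting the conditions are local in $i$ and that the outcome of the truncation is $\pi^{(m)}$.
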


We also have an analog of Lemma~\ref{L-PP2}.  Recall that we define $a_i(\pi)$ to be the largest $j\le i$ such that $\pi_i\ge  \{\pi_{i},\pi_{i-1},\ldots,\pi_{i-j+1}\}$.
\begin{lem}\label{L-k2}
	Let $1\le m\le n$ and $\pi \in \c{S}_n$. Let $(C_1,\ldots,C_{m-1})$ be a partial $\c{K}$-interval parking function with outcome $\pi^{(m-1)}$ and let $p=\pi_m^{-1}$.  Then $(C_1,\ldots,C_{m})$ is a partial $\c{K}$-interval parking function with outcome $\pi^{(m)}$ if and only if $C_m=[r,r+k-1]$ with $k\in K_i$ and \[\max\{p-a_p(\pi)+1,\ p-k+1\}\le r\le \min\{p,\ n-k+1\}.\]
\end{lem}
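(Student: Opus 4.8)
The plan is to apply Lemma~\ref{L-PP2} (or rather its obvious $\c{K}$-interval analog) and then translate the two conditions it gives — "$|C_m|\in K_m$" and "$C_m$ has outcome $\pi^{(m)}$" — into the claimed inequality on the left endpoint $r$. Write $C_m=[r,r+k-1]$ with $k\in K_m$; the first condition is then immediate, so everything reduces to pinning down which intervals of length $k$ cause car $m$ to land in spot $p=\pi_m^{-1}$. The set of occupied spots when car $m$ arrives is $T:=\{\pi_{i'}^{-1}:i'<m\}$, and by definition car $m$ parks in the smallest element of $[r,r+k-1]\sm T$, so "$C_m$ has outcome $\pi^{(m)}$" is equivalent to the conjunction of three things: (i) $p\in[r,r+k-1]$, i.e. $p-k+1\le r\le p$; (ii) $[r,p-1]\sub T$, i.e. every spot strictly between the left endpoint and $p$ is already taken; and (iii) $C_m\sub[n]$, i.e. $r\ge 1$ and $r+k-1\le n$, the latter being $r\le n-k+1$ (the former, $r\ge1$, is subsumed once we identify the real lower bound).

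The crux is step (ii): I need to show that $\{r:[r,p-1]\sub T\}=\{r: p-a_p(\pi)+1\le r\le p\}$, which is exactly the content of the definition of $a_p(\pi)$ once one checks that the spots immediately to the left of $p$ that are occupied before car $m$ parks are precisely spots $p-1,p-2,\ldots,p-a_p(\pi)+1$. Here is the identification. A spot $q<p$ lies in $T$ iff $\pi_q<m$, i.e. iff $\pi_q<\pi_p$ (since $\pi_p=m$). So the maximal block of occupied spots ending at $p-1$ has length equal to the largest $t$ such that $\pi_{p-1},\pi_{p-2},\ldots,\pi_{p-t}$ are all $<\pi_p$; equivalently, the largest $j$ such that $\pi_p\ge\{\pi_p,\pi_{p-1},\ldots,\pi_{p-j+1}\}$ is $j=t+1=a_p(\pi)$. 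Hence $[r,p-1]\sub T$ holds exactly when $r\ge p-a_p(\pi)+1$. This also automatically gives $r\ge p-a_p(\pi)+1\ge p-p+1=1$ since $a_p(\pi)\le p$, so the constraint $r\ge1$ is absorbed.

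Combining: the valid left endpoints are those $r$ with $r\ge p-k+1$ (from $p\le r+k-1$), $r\ge p-a_p(\pi)+1$ (from (ii)), $r\le p$ (from $p\ge r$), and $r\le n-k+1$ (from $C_m\sub[n]$), which is precisely
\[
\max\{p-a_p(\pi)+1,\ p-k+1\}\le r\le \min\{p,\ n-k+1\}.
\]
I expect step (ii) — correctly matching the run of occupied cells left of $p$ with the statistic $a_p(\pi)$ — to be the only subtle point; the rest is bookkeeping on interval endpoints. One should also note in passing that when $m=n$ the outcome string $\pi^{(m)}$ is a genuine permutation, so no degenerate cases arise, and that the lemma as stated inherits the standing hypothesis that $(C_1,\ldots,C_{m-1})$ already realizes $\pi^{(m-1)}$, which is what makes $T=\{\pi_{i'}^{-1}:i'<m\}$ the correct occupied set.
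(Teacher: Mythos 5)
Your proof is correct and follows essentially the same route as the paper's: both characterize the admissible intervals $[r,r+k-1]$ by requiring $p\in C_m$, $C_m\subseteq[n]$, and that every spot of $C_m$ left of $p$ be occupied, with the last condition translated into $r\ge p-a_p(\pi)+1$ via the definition of $a_p(\pi)$. Your explicit identification of the occupied run $\{p-1,\ldots,p-a_p(\pi)+1\}$ (using $\pi_q<m=\pi_p$ for occupied spots $q<p$) spells out a step the paper leaves as ``by definition,'' but the argument is the same.
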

\begin{proof}
	Assume $(C_1,\ldots,C_{m})$ is such a partial $\c{K}$-interval parking function with $C_m=[r,r+k-1]$ for some $r$ and $k$.  Because $|C_m|=k$ we require $k\in K_m$, and because $C_m\sub [n]$ we must have $r+k-1\le n$. We also need $p-k+1\le r\le p$ so that this set contains $p$.  Further, we require every $x\in [r,p]$ to satisfy $\pi_x<m$, otherwise $p$ will not be the smallest element of $C_m\sm \{\pi_i^{-1}:i<m\}$, which would contradict $(C_1,\ldots,C_m)$ having outcome $\pi^{(m)}$.  By definition this will not be the case if $r<p-a_p(\pi)+1$, so $r\ge p-a_p(\pi)+1$.  We conclude that $r$ satisfies the desired inequalities.
	
	Conversely, assume $C_m=[r,r+k]$ has $r$ and $k$ satisfying these conditions.  Because $p-a_p(\pi)+1\ge 1$ we have $C_m\sub [n]$, and we also have $|C_m|=k\in K_i$.  Again by definition of $a_p(\pi)$ these inequalities imply that $p$ is the smallest element of $C_m\sm \{\pi_i^{-1}:i<m\}$, so this gives the desired partial $\c{K}$-interval parking function.
\end{proof}

Let $\IPF(n,\c{K},\pi)$ denote the number of $\c{K}$-interval parking functions with outcome $\pi$ and define \[b_i(\pi,k):=\begin{cases*}
\min\{a_i(\pi),\ k\} & $i\le n-k$, \\ 
0 & $a_i(\pi)< k+i-n$,\\ 
\min\{n-i-k+a_i(\pi)+1,n-i+1\} & otherwise.
\end{cases*}\]
\begin{thm}\label{T-TechI}
	For any $n\ge 1,\ \pi\in \c{S}_n$, and $\c{K}=(K_1,\ldots,K_n)$, 
	\[\IPF(n,\c{K},\pi)=\prod_{i=1}^n \l(\sum_{k\in K_{\pi_i}} b_{i}(\pi,k)\r).\]
\end{thm}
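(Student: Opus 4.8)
The plan is to mimic the proofs of Theorems~\ref{T-Tech} and~\ref{T-TechPi}: build up a $\c{K}$-interval parking function with outcome $\pi$ one car at a time, count the choices available at each step using Lemma~\ref{L-k2}, and multiply, since the number of choices will turn out to be independent of the earlier choices.

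First I would set up the recursion. Starting from the empty list, and having already constructed a partial $\c{K}$-interval parking function $(C_1,\ldots,C_{m-1})$ with outcome $\pi^{(m-1)}$, we choose a set $C_m$ so that $(C_1,\ldots,C_m)$ is a partial $\c{K}$-interval parking function with outcome $\pi^{(m)}$. By Lemma~\ref{L-k1}, every $\c{K}$-interval parking function with outcome $\pi$ is obtained exactly once in this way, so it suffices to count the valid choices for $C_m$ at each stage.

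Next I would carry out the count. Write $p=\pi_m^{-1}$ for the spot occupied by car $m$, and note that car $m$ sits in spot $p$ means $\pi_p=m$, so $K_m=K_{\pi_p}$. By Lemma~\ref{L-k2}, the valid choices for $C_m$ are exactly the intervals $[r,r+k-1]$ with $k\in K_{\pi_p}$ and $\max\{p-a_p(\pi)+1,\ p-k+1\}\le r\le \min\{p,\ n-k+1\}$. For a fixed length $k$, the number of admissible $r$ is therefore
\[
\min\{p,\ n-k+1\}-\max\{p-a_p(\pi)+1,\ p-k+1\}+1
\]
when this is positive, and $0$ otherwise. Using the identity $\max\{p-a_p(\pi)+1,\ p-k+1\}=p+1-\min\{a_p(\pi),k\}$ this simplifies, and a short case analysis on whether $p\le n-k$ and whether $a_p(\pi)\le k$ shows it equals $b_p(\pi,k)$: when $p\le n-k$ one gets $\min\{a_p(\pi),k\}$ (always positive); when $p>n-k$ one gets $n-p+1-k+\min\{a_p(\pi),k\}$, which is $b_p(\pi,k)$ once one checks that this quantity is nonpositive precisely when $a_p(\pi)<k+p-n$ (the case $a_p(\pi)\le k$ also gives exactly the first argument of the $\min$ in the ``otherwise'' branch, while $a_p(\pi)>k$ gives $n-p+1$). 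Summing over $k\in K_{\pi_p}$, the number of choices for $C_m$ is $\sum_{k\in K_{\pi_p}} b_p(\pi,k)$, which depends only on $p$ and not on the previously chosen sets.

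Finally I would assemble the count. The total number of $\c{K}$-interval parking functions with outcome $\pi$ is $\prod_{m=1}^n \sum_{k\in K_{\pi_{\pi_m^{-1}}}} b_{\pi_m^{-1}}(\pi,k)$, and re-indexing the product by $i=\pi_m^{-1}$ (equivalently $m=\pi_i$), which runs over $[n]$ bijectively, yields $\prod_{i=1}^n \sum_{k\in K_{\pi_i}} b_i(\pi,k)$, as claimed. The main obstacle is the bookkeeping in the case analysis above — reconciling the three-way definition of $b_i(\pi,k)$, including correctly pinning down when the count of valid $r$ drops to $0$, with the single $\min/\max$ expression — but this is elementary integer arithmetic; the structural skeleton of the argument is identical to that of the two technical theorems already established.
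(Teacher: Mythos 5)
Your proposal is correct and follows essentially the same route as the paper's proof: iteratively extend partial $\c{K}$-interval parking functions via Lemma~\ref{L-k1}, count the admissible intervals at each step with Lemma~\ref{L-k2}, verify by the same case analysis that the count for each length $k$ is $b_p(\pi,k)$, and reindex the product. The arithmetic identifying the interval count with $b_p(\pi,k)$, including the degenerate case $a_p(\pi)<k+p-n$, checks out.
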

\begin{proof}
	We consider the number of ways to iteratively build partial $\c{K}$-interval parking functions with outcomes $\pi^{(m)}$.  If one has already chosen $(C_1,\ldots,C_{i-1})$ and $p=\pi_i^{-1}$, then by Lemma~\ref{L-k2} the number of ways to choose an appropriate $C_i$ with $|C_i|=k\in K_i$ is the number of $r$ in the range \[\max\{p-a_p(\pi)+1,p-k+1\}\le r\le \min\{p,n-k+1\}.\]  If $p\le n-k$ this number is exactly $\min\{a_p(\pi),k\}$. Otherwise it is \[\max\{0,\min\{n-k-p+a_p(\pi)+1,n-p+1\}\}.\]  Because $n-p+1\ge 1$, this quantity is 0 if and only if $a_p(\pi)+1\le k+p-n$.  Thus the number of choices for $C_i$ with $|C_i|=k$ is exactly $b_p(\pi,k)$.  Summing this over all $k\in K_i$ gives a quantity independent of all the other $C_j$, so we can take the product of these values and conclude 
	\[\IPF(n,\c{K},\pi)=\prod_{i=1}^n \l(\sum_{k\in K_{i}} b_{\pi_i^{-1}}(\pi,k)\r).\]
	By reindexing this product, we get the stated result.
\end{proof}

\begin{proof}[Proof of Theorem~\ref{T-kI}]
	Recall that we wish to prove \[\IPF_k(n,\pi)=\prod_{i=1}^{n-k} \min\{a_i(\pi),k\}\cdot \prod_{i=n-k+1}^{n}\min\{n-i-k+a_{i}(\pi)+1,n-i+1\}\] whenever $\pi\in \c{S}_n^k$.  That is, whenever $\pi_n>\cdots>\pi_{n-k+1}$.  Observe that $k$-interval parking functions are exactly $\c{K}$-interval parking functions with $K_i=\{k\}$ for all $i$, so a formula for $\IPF_k(n,\pi)$ is given by Theorem~\ref{T-TechI}.  It remains to rewrite this formula into the desired form.
	
	If $\pi\notin \c{S}_n^k$, then there exists some $i$ with $0\le i\le k-2$ and $\pi_{n-i}<\pi_{n-i-1}$. This implies $a_{n-i}(\pi)=1<k-i$, and hence $b_{n-i}(\pi,k)=0$.  Thus $\IPF_k(n,\pi)=0$.
	
	From now on we assume $\pi \in \c{S}_n^k$.  This implies $a_{n-i}(\pi)\ge k-i$ for all $0\le i\le k-1$, and hence for these $i$ we have $b_{n-i}(\pi,k)=\min\{i-k+a_{n-i}(\pi)+1,i+1\}$.  This gives $b_j(\pi,k)$ for all $j\ge n-k+1$, and otherwise we have $b_j(\pi,k)=\min\{a_j(\pi),k\}$.  Taking the products of these terms gives the desired result. 
\end{proof}

\begin{proof}[Proof of Corollary~\ref{C-n-1}]
	The statement can be verified for $n=2$, so assume $n\ge 3$.  By Theorem~\ref{T-k} we have for $\pi \in S^{n-1}_n$ that \begin{align*}\IPF_{n-1}(n,\pi)&=\min\{a_1(\pi),n-1\}\cdot \min\{a_2(\pi),n-1\}\cdot \prod_{i=3}^{n} \min\{a_{i}(\pi)-i+2,n-i+1\}\\&=a_2(\pi)\cdot \prod_{i=3}^{n-1} \min\{a_{i}(\pi)-i+2,n-i+1\},\end{align*}
	where we used $a_1(\pi)\le 1$ and $a_2(\pi)\le 2\le n-1$.  We claim that this is equal to 
	\[\prod_{i=2}^{n-1} (a_i(\pi)-i+2).\]
	Indeed this follows from the fact that $a_i(\pi)-i+2\le 2\le n-i+1$ for all $i\le n-1$.  
	
	Assume $\pi_1=j$, and recall that $\pi\in \c{S}_n^{n-1}$ implies that $\pi_2<\cdots<\pi_n$. Thus for all $i>1$ we have $a_i(\pi)=i$ if $\pi_i>j$ and $a_i(\pi)=i-1$ otherwise.  Thus  $j=n$ implies that $a_i(\pi)=i-1$ for all $i\ge 2$, and otherwise there are exactly $n-1-j$ different $i$ with $2\le i\le n-1$ and $a_i(\pi)=i$.  We conclude the first result. For the second result,
	\[
	\IPF_{n-1}(n)=\sum_{\pi \in \c{S}_n^{n-1}} \IPF_{n-1}(n,\pi)=1+\sum_{j=1}^{n-1} 2^{n-j-1}=2^{n-1}.
	\]
\end{proof}
In principle this same technique can be used to compute $\IPF_{n-k}(n,\pi)$ and $\IPF_{n-k}(n)$ for any fixed $k$, though the case analysis and computations become rather complicated.  We note that one can prove $\IPF_{n-1}(n)=2^{n-1}$ more directly by observing that $(C_1,\ldots,C_n)$ will be an $(n-1)$-interval parking function if and only if $C_n=[2,n]$ and $C_i$ is $[1,n-1]$ or $[2,n]$ for all other $i$.

Before proving Corollary~\ref{C-2}, we give an enumeration result for permutations in $\c{S}_n^2$ with a given number of ascents.  We adopt the convention $\Eul{0}{k}=0$ for $k>0$, $\Eul{0}{0}=1$, and $\Eul{n}{-1}=0$.

\begin{lem}\label{L-Count}
	For all $n$ and $k$ with $n\ge 1$ and $0\le k\le n-1$, let $\c{S}_{n,k}^+$ be the set of permutations of size $n$ which have $\pi_{n-1}<\pi_n$ and which have exactly $k$ ascents.  If $P(n,k):=|\c{S}_{n,k}^+|$, then  \[P(n,k)=(n-k)\Eul{n-1}{k-1}.\]
\end{lem}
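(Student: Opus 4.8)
The plan is to establish a two-variable recurrence for $P(n,k)$ by tracking the slot into which the largest value $n$ is inserted, and then to verify the claimed formula by induction on $n$ via the Eulerian recurrence $\Eul{m}{j}=(j+1)\Eul{m-1}{j}+(m-j)\Eul{m-1}{j-1}$. Recall the standard bijection between $\c{S}_n$ and pairs $(\sigma,g)$: here $\sigma\in\c{S}_{n-1}$ is obtained by deleting $n$ from $\pi$, and $g\in\{g_0,\dots,g_{n-1}\}$ records which of the $n$ slots of $\sigma$ the value $n$ occupied, where slot $g_i$ lies between $\sigma_i$ and $\sigma_{i+1}$ (with $g_0$ before $\sigma_1$ and $g_{n-1}$ after $\sigma_{n-1}$). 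A check on adjacent pairs shows: inserting $n$ into $g_0$ does not change $\asc$; inserting $n$ into $g_i$ for $1\le i\le n-2$ leaves $\asc$ unchanged when $\sigma_i<\sigma_{i+1}$ and raises it by $1$ when $\sigma_i>\sigma_{i+1}$; inserting $n$ into $g_{n-1}$ raises $\asc$ by $1$. (This is the usual bookkeeping behind the Eulerian recurrence.)

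Next I would determine which pairs $(\sigma,g)$ produce a $\pi$ with $\pi_{n-1}<\pi_n$, splitting on the slot $g$. If $g=g_{n-1}$, then $\pi_n=n>\pi_{n-1}$ and $\asc(\pi)=\asc(\sigma)+1$, so these pairs contribute $\Eul{n-1}{k-1}$ to $P(n,k)$. If $g=g_{n-2}$, then $\pi_{n-1}=n>\pi_n$, which is forbidden. If $g=g_i$ with $0\le i\le n-3$, then the final two entries $\sigma_{n-2},\sigma_{n-1}$ of $\sigma$ are undisturbed and become $\pi_{n-1},\pi_n$, so $\pi_{n-1}<\pi_n$ holds if and only if $\sigma_{n-2}<\sigma_{n-1}$. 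For such a $\sigma$ with $\asc(\sigma)=j$, among the $n-2$ slots $g_0,\dots,g_{n-3}$ exactly $j$ keep the ascent count equal to $j$ — namely $g_0$ together with the slots $g_i$, $1\le i\le n-3$, for which $\sigma_i<\sigma_{i+1}$; there are $j-1$ of the latter, since $\sigma$ has exactly $j$ slots $g_i$ ($1\le i\le n-2$) with $\sigma_i<\sigma_{i+1}$ and one of them is the excluded slot $g_{n-2}$ — while the other $n-j-2$ raise the ascent count to $j+1$. Summing over $j$ yields
\[
P(n,k)=\Eul{n-1}{k-1}+k\cdot P(n-1,k)+(n-k-1)\cdot P(n-1,k-1).
\]

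Finally I would induct on $n$; the base case $n=2$ is immediate ($P(2,0)=0$, $P(2,1)=1$, matching $(2-k)\Eul{1}{k-1}$). For the inductive step, substitute $P(n-1,k)=(n-1-k)\Eul{n-2}{k-1}$ and $P(n-1,k-1)=(n-k)\Eul{n-2}{k-2}$ into the recurrence; then $P(n,k)=(n-k)\Eul{n-1}{k-1}$ becomes equivalent, after cancelling $\Eul{n-1}{k-1}$, to
\[
(n-k-1)\Eul{n-1}{k-1}=k(n-1-k)\Eul{n-2}{k-1}+(n-k-1)(n-k)\Eul{n-2}{k-2},
\]
which is immediate after expanding $\Eul{n-1}{k-1}=k\Eul{n-2}{k-1}+(n-k)\Eul{n-2}{k-2}$ on the left. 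The conventions $\Eul{0}{0}=1$, $\Eul{0}{k}=0$ for $k>0$, and $\Eul{m}{-1}=0$ let the boundary cases $k=0$ and $k=n-1$ go through uniformly. The only delicate point is the ascent bookkeeping in the recurrence, in particular recognizing that the forbidden slot $g_{n-2}$ is precisely the one ascent slot of $\sigma$ that must be discarded in the case $g=g_i$ with $i\le n-3$, which is why $j$ rather than $j+1$ of those slots preserve $\asc$; once the recurrence is in hand the induction is routine. (One could instead delete the last entry of $\pi$ and standardize, which reduces the lemma to the identity $\sum_{\sigma\in\c{S}_m,\,\asc(\sigma)=j}\sigma_m=(j+1)\Eul{m}{j}$, i.e., that a permutation of $[m]$ with $j$ ascents has average last entry $j+1$; but proving that takes comparable effort.)
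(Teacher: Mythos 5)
Your proof is correct, and while it follows the same broad strategy as the paper --- derive a recurrence for $P(n,k)$ by deleting an extremal letter from $\pi$ and then verify the closed form by induction via the Eulerian recurrence --- the decomposition is genuinely different in a way worth noting. The paper deletes the letter $1$ and analyzes where it sat; when $1$ occupies position $n-1$ the image permutation ends in a \emph{descent}, which forces the paper to introduce the auxiliary count $M(n,k)=|\c{S}_{n,k}^-|$ and eliminate it afterwards via $P(n,k)+M(n,k)=\Eul{n}{k}$. Your choice of tracking the letter $n$ instead makes the terminal slot $g_{n-1}$ contribute the full Eulerian number $\Eul{n-1}{k-1}$ outright (appending $n$ always creates a final ascent, with no condition on $\sigma$), so your recurrence
\[
P(n,k)=\Eul{n-1}{k-1}+k\,P(n-1,k)+(n-k-1)\,P(n-1,k-1)
\]
is closed in $P$ alone and the inductive step reduces to one application of $\Eul{n-1}{k-1}=k\Eul{n-2}{k-1}+(n-k)\Eul{n-2}{k-2}$; this is a mild simplification over the paper's route. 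Your slot bookkeeping checks out: for $g=g_i$ with $i\le n-3$ the last two letters of $\sigma$ survive as $\pi_{n-1},\pi_n$, and for $\sigma\in\c{S}_{n-1,j}^+$ exactly $j$ of the $n-2$ admissible slots preserve the ascent count ($g_0$ plus the $j-1$ interior ascent slots, the ascent at position $n-2$ being the forbidden slot $g_{n-2}$), leaving $n-j-2$ that raise it; I verified the resulting recurrence against $n=3$. The only loose end is the degenerate case $n=1$ in the statement, which your base case $n=2$ skips, but the condition $\pi_{n-1}<\pi_n$ is vacuous there and the lemma is only invoked for $n\ge 2$, so this is cosmetic.
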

We note that this result is implicitly proven in \cite{S}, but for completeness we include the full proof here. To prove this, we recall the following recurrence for Eulerian numbers, which is valid for all $n,k\ge 1$  \cite{Wolf}.

\begin{align}\label{E-Eulerian}
\Eul{n}{k}=(k+1)\Eul{n-1}{k}+(n-k)\Eul{n-1}{k-1}.
\end{align}

\begin{proof}
	The result is true for $k=0$, so assume that we have proven the result up to $k\ge 1$.  For any fixed $k$ the result is true for $n=1$, so assume the result has been proven up to $n\ge 2$. 
	To help us prove the result, we define $\c{S}_{n,k}^-$ to be the set of permutations which have $\pi_{n-1}>\pi_n$ and which have exactly $k$ ascents.  Define $M(n,k):=|\c{S}_{n,k}^-|$.  By construction we have
	\begin{equation}\label{E-MP}
	P(n,k)+M(n,k)=\Eul{n}{k}.
	\end{equation}
	
	Define the map $\phi:\c{S}_{n,k}^+\to \c{S}_{n-1}$ by sending $\pi \in \c{S}_{n,k}^+$ to the word obtained by removing the letter $1$ from $\pi$ and then decreasing the value of each letter by 1.  For example, $\phi(32514)=2143$.  We wish to determine the image of $\phi$.  Let $\pi$ be a permutation in $\c{S}_{n,k}^+$, and let $i$ denote the position of $1$ in $\pi$.  Note that $i\ne n$ since $\pi$ ends with an ascent.  If $\pi_{i-1}<\pi_{i+1}$ with $1<i<n$, then $\phi(\pi)$ will continue to have $k$ ascents and end with an ascent, so $\phi(\pi)\in \c{S}_{n-1,k}^+$.  If $i=1$ or $\pi_{i-1}>\pi_{i+1}$ with $1<i<n-1$, then $\phi(\pi)\in \c{S}_{n-1,k-1}^+$.  If $i=n-1$ and $\pi_{n-2}>\pi_n$, then $\phi(\pi)\in \c{S}_{n-1,k-1}^-$.
	
	It remains to show how many times each element of the image is mapped to by $\phi$.  If $\pi\in \c{S}_{n-1,k}^+$, then $1$ can be inserted into $\pi$ in $k$ ways to obtain an element of $\c{S}_{n,k}^+$ (it can be placed between any of the $k$ ascents $\pi_i<\pi_{i+1}$).  If $\pi\in \c{S}_{n-1,k-1}^+$, then 1 can be inserted into $\pi$ in $n-k$ ways (it can be placed at the beginning of $\pi$ or between any of the $n-1-k$ descents $\pi_i>\pi_{i+1}$).  If $\pi\in \c{S}_{n-1,k}^-$, then $1$ must be inserted in between $\pi_{n-1}>\pi_n$ in order to have the word end with an ascent.  With this and the inductive hypothesis, we conclude that
	\begin{align}
	P(n,k)&=kP(n-1,k)+(n-k)P(n-1,k-1)+M(n-1,k-1)\nonumber\\ 
	&=k(n-k-1)\Eul{n-2}{k-1}+(n-k)^2\Eul{n-2}{k-2}+M(n-1,k-1).\label{E-AEul}
	\end{align}
	
	By using \eqref{E-MP}, the inductive hypothesis, and \eqref{E-Eulerian}; we find \begin{align*}M(n-1,k-1)&=\Eul{n-1}{k-1}-P(n-1,k-1)\\ &=\Eul{n-1}{k-1}-(n-k)\Eul{n-2}{k-2}\ \\ &=k\Eul{n-2}{k-1}.\end{align*}
	
	Plugging this into \eqref{E-AEul} and using \eqref{E-Eulerian} gives \[P(n,k)=(n-k)\l(k\Eul{n-2}{k-1}+(n-k)\Eul{n-2}{k-2}\r)=(n-k)\Eul{n-1}{k-1},\]
	as desired.
\end{proof}

\begin{proof}[Proof of Corollary~\ref{C-2}]
	By Theorem~\ref{T-k} we have, after evaluating terms which are automatically 1, 
	\[\IPF_2(n,\pi)=\prod_{i=2}^{n-1} \min\{a_i(\pi),2\}.\]
	Note that $a_i(\pi)\ge 2$ if and only if $\pi_{i-1}<\pi_i$.  There are exactly $\asc(\pi)-1$ different $i$ with $2\le i<n$ satisfying this, where we subtract 1 since $\pi\in \c{S}_n^2$ implies that there is always an ascent at position $n-1$.  We conclude the first result.
	
	For the second result, we sum $\IPF_2(n,\pi)$ over all $\pi\in \c{S}_n^2$.  Each term contributes $2^{\asc(\pi)-1}$, so we conclude the result by Lemma~\ref{L-Count} after noting that $\Eul{n-1}{-1}=0$.
\end{proof}

Corollary~\ref{C-2} shows that, for $n\ge 2$, $\IPF_2(n)$ is equal to the number of connected threshold graphs on $n$ vertices \cite{S}.  This can be proven bijectively from essentially the same proof as in \cite{S}, but for brevity we omit the details.  The formulas for $k=3$ and $k=n-2$ seem complicated (though the latter can be put into a closed form), and as of this writing neither sequence appears in the OEIS.

Before proving our enumeration results for interval parking functions, we first directly enumerate the number of parking functions with a given outcome. In what follows we treat parking functions as subset parking functions $(C_1,\ldots,C_n)$ where $C_i=[c_i,n]$ for some $1\le c_i\le n$.   Define a partial parking function $(C_1,\ldots,C_m)$ analogous to how we defined $\c{L}$-partial parking functions.  We immediately have the following.

\begin{lem}\label{L-P1}
	Let $1\le m\le n$ and $\pi \in \c{S}_n$.  $\c{C}$ is a parking function with outcome $\pi$ if and only if $(C_1,\ldots,C_m)$ is a partial interval parking function with outcome $\pi^{(m)}$ for all $m$.
\end{lem}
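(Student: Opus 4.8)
The plan is to verify both implications by unwinding the definitions, exactly as was done for Lemma~\ref{L-PP1} and Lemma~\ref{L-k1}. Recall that, under our identification, a parking function with outcome $\pi$ is a list $\c{C}=(C_1,\ldots,C_n)$ in which each $C_i=[c_i,n]$ and in which, for every $1\le i\le n$, having $\pi_i^{-1}=j$ forces $j$ to be the least element of $C_i\sm\{\pi_{i'}^{-1}:i'<i\}$; a partial interval parking function with outcome $\pi^{(m)}$ asks for precisely the same thing, but only for $1\le i\le m$, together with the requirement that each $C_i$ be an interval of the form $[c_i,n]$.

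For the forward direction, suppose $\c{C}$ is a parking function with outcome $\pi$ and fix $m$. The key (and only) point to notice is that the condition imposed on car $i$ involves solely the set $C_i$ and the positions $\pi_{i'}^{-1}$ with $i'<i$; hence it is unaffected by discarding the cars indexed $m+1,\ldots,n$. Thus $(C_1,\ldots,C_m)$ still satisfies every constraint required of a partial interval parking function, and the positions it records for cars $1,\ldots,m$ are exactly those recorded by $\pi$, which is precisely what the outcome string $\pi^{(m)}$ encodes. So $(C_1,\ldots,C_m)$ is a partial interval parking function with outcome $\pi^{(m)}$ for every $m$.

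For the reverse direction it suffices to invoke the hypothesis at $m=n$: a partial interval parking function $(C_1,\ldots,C_n)$ with outcome $\pi^{(n)}=\pi$ is, by direct comparison of the two definitions, literally a parking function with outcome $\pi$ (the intermediate values $m<n$ are not needed here). There is essentially no obstacle to overcome; the lemma is the ``interval'' instance of the general principle already recorded in Lemma~\ref{L-PP1}, and the single thing one must check is the locality remark above — that the constraint on car $i$ refers only to data indexed by $i'\le i$ — so that truncation neither creates nor destroys any constraint.
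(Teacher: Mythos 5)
Your proof is correct and matches the paper's treatment: the paper states this lemma (like Lemmas~\ref{L-PP1} and \ref{L-k1}) as immediate from unpacking the definitions, and your write-up is exactly that unpacking — the locality of the constraint on car $i$ gives the forward direction by truncation, and $m=n$ gives the converse.
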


We also have an analog of Lemma~\ref{L-k2}.
\begin{lem}\label{L-P2}
	Let $1\le m\le n$ and $\pi \in \c{S}_n$. Let $(C_1,\ldots,C_{m-1})$ be a partial parking function with outcome $\pi^{(m-1)}$ and let $p=\pi_m^{-1}$.  Then $(C_1,\ldots,C_{m})$ is a partial parking function with outcome $\pi^{(m)}$ if and only if $C_m=[r,n]$ with $p-a_p(\pi)+1\le r\le p$.
\end{lem}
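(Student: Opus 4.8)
The plan is to unpack the definitions in exactly the same spirit as the proof of Lemma~\ref{L-k2}, the only difference being that a (classical) parking function forces $C_m$ to be an interval of the special form $[r,n]$ rather than an arbitrary interval of prescribed length. Throughout I would keep in mind that $\pi^{(m)}$ is obtained from $\pi^{(m-1)}$ by filling position $p=\pi_m^{-1}$ with the letter $m$, and that the spots occupied before car $m$ parks are exactly $\{\pi_i^{-1}:i<m\}$. Since we are given that $(C_1,\ldots,C_{m-1})$ is a partial parking function with outcome $\pi^{(m-1)}$, the task reduces to determining which sets $C_m=[r,n]$ make car $m$ land in spot $p$, i.e.\ make $p$ the smallest element of $C_m\sm\{\pi_i^{-1}:i<m\}$.

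For the forward direction I would argue: if $(C_1,\ldots,C_m)$ is a partial parking function with outcome $\pi^{(m)}$, write $C_m=[r,n]$. Then $p\in C_m$ gives $r\le p$, and minimality of $p$ in $C_m\sm\{\pi_i^{-1}:i<m\}$ forces every $x$ with $r\le x<p$ to be already occupied, i.e.\ $\pi_x<m=\pi_p$. By the definition of $a_p(\pi)$ (the largest $j\le p$ for which $\pi_p$ is the maximum of $\pi_{p-j+1},\ldots,\pi_p$), this is exactly the statement $r\ge p-a_p(\pi)+1$. Conversely, suppose $C_m=[r,n]$ with $p-a_p(\pi)+1\le r\le p$. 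Then $r\ge 1$ because $a_p(\pi)\le p$, so $C_m$ is a genuine non-empty subinterval of $[n]$; the bound $r\ge p-a_p(\pi)+1$ together with the definition of $a_p(\pi)$ guarantees $\pi_x<m$ for every $x$ with $r\le x\le p-1$, so all of $r,\ldots,p-1$ are occupied while $p$ is not, making $p$ the smallest free element of $C_m$. Combined with the hypothesis on $(C_1,\ldots,C_{m-1})$, this shows $(C_1,\ldots,C_m)$ is a partial parking function with outcome $\pi^{(m)}$.

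There is essentially no hard step here; the one point requiring a little care is translating the occupancy condition ``$\pi_x<m$ for all $x$ with $r\le x<p$'' into the clean inequality $r\ge p-a_p(\pi)+1$, which hinges on the observation that $\pi_p=m$, so the window of consecutive positions ending at $p$ on which $\pi_p$ is maximal is precisely the window on which all other entries are $<m$. I would also double-check the degenerate cases $r=p$ (the window $[r,p-1]$ is empty, imposing no condition) and $p-a_p(\pi)=0$ (the lower bound on $r$ becomes simply $1$), both of which are immediate. I expect the final write-up to be two short paragraphs mirroring the proof of Lemma~\ref{L-k2}.
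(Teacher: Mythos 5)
Your proposal is correct and follows essentially the same route as the paper: bound $r\le p$ from $p\in C_m$, translate the occupancy condition $\pi_x<m$ on $[r,p-1]$ into $r\ge p-a_p(\pi)+1$ via the definition of $a_p(\pi)$, and reverse the same reasoning for the converse. Your handling of the degenerate cases is a slight (harmless) elaboration beyond what the paper writes.
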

\begin{proof}
Assume $(C_1,\ldots,C_{m})$ is such a partial parking function with $C_m=[r,n]$ for some $r$.  We need $r\le p$ so that this set contains $j$.  Further, we require every $x\in [r,p]$ to satisfy $\pi_x<m$, otherwise $p$ will not be the smallest element of $C_m\sm \{\pi_i^{-1}:i<m\}$, which would contradict $(C_1,\ldots,C_m)$ having outcome $\pi^{(m)}$.  By definition this will not be the case if $r<p-a_p(\pi)+1$, so $r\ge p-a_p(\pi)+1$.  We conclude that $r$ satisfies the desired inequalities.

Conversely, assume $C_m=[r,n]$ has $r$ satisfying these inequalities.  Because $p-a_p(\pi)+1\ge 1$ we have $C_m\sub [n]$.  Again by definition of $a_p(\pi)$ these inequalities imply that $p$ is the smallest element of $C_m\sm \{\pi_i^{-1}:i<m\}$, so this gives the desired partial parking function.
\end{proof}

\begin{prop}\label{P-Park}
	\[\PF(n,\pi)=\prod_{i=1}^na_i(\pi).\]
\end{prop}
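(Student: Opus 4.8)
The plan is to mimic the iterative-construction argument used for Theorem~\ref{T-TechI}, now invoking Lemmas~\ref{L-P1} and \ref{L-P2} in place of Lemmas~\ref{L-k1} and \ref{L-k2}. Concretely, I would build a parking function with outcome $\pi$ one coordinate at a time: start from the empty list, and having already chosen a partial parking function $(C_1,\ldots,C_{m-1})$ with outcome $\pi^{(m-1)}$, count the number of admissible choices of $C_m$ for which $(C_1,\ldots,C_m)$ is a partial parking function with outcome $\pi^{(m)}$.

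By Lemma~\ref{L-P1}, every parking function with outcome $\pi$ is obtained exactly once in this way, so $\PF(n,\pi)$ equals the product over $m$ of the number of choices available at step $m$. By Lemma~\ref{L-P2}, writing $p=\pi_m^{-1}$, the admissible sets $C_m$ are precisely the intervals $[r,n]$ with $p-a_p(\pi)+1\le r\le p$, and there are exactly $a_p(\pi)$ such values of $r$. The key point is that this count depends only on $\pi$ (through $a_p(\pi)$) and not on the previously chosen sets $C_1,\ldots,C_{m-1}$, so the choices at distinct steps are independent and may be multiplied: $\PF(n,\pi)=\prod_{m=1}^n a_{\pi_m^{-1}}(\pi)$.

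It then remains to reindex the product. As $m$ ranges over $[n]$, the index $p=\pi_m^{-1}$ ranges over all of $[n]$ as well, since $m\mapsto \pi_m^{-1}$ is a bijection; hence $\prod_{m=1}^n a_{\pi_m^{-1}}(\pi)=\prod_{i=1}^n a_i(\pi)$, which is the claimed formula.

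There is no genuine obstacle here: the proof is a direct application of the two preceding lemmas, and the only steps meriting an explicit word are the independence of the per-step count from the earlier choices (immediate from the form of the bounds in Lemma~\ref{L-P2}, which involve only $p$ and $a_p(\pi)$) and the harmless reindexing. One could additionally remark that combining this with Theorem~\ref{T-I}'s enumeration recovers $\PF(n)=(n+1)^{n-1}$ by summing $\prod_{i=1}^n a_i(\pi)$ over $\pi\in\c{S}_n$, though this is not needed for the proposition.
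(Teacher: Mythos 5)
Your proof is correct and follows exactly the paper's argument: iteratively extend partial parking functions, apply Lemma~\ref{L-P2} to count the $a_{\pi_m^{-1}}(\pi)$ choices at each step, and reindex via the bijection $m\mapsto\pi_m^{-1}$. No issues.
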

\begin{proof}
	Assume one has chosen $C_1,\ldots,C_{i-1}$ so that $(C_1,\ldots,C_{i-1})$ is a partial parking function with outcome $\pi^{(i-1)}$.  There are $a_{\pi^{-1}_i}(\pi)$ choices for $C_i$ to make  $(C_1,\ldots,C_{i})$ a partial parking function with outcome $\pi^{(i)}$ by Lemma~\ref{L-P2}.  Every parking is obtained this way by Lemma~\ref{L-P1}, so  taking the product over all these values and reindexing gives the desired result.
\end{proof}
We use Theorem~\ref{T-TechI} to prove Theorem~\ref{T-I}, and to do so we require the following lemma.

\begin{lem}\label{L-Sum}
	For any $n\ge 1$ and $\pi\in \c{S}_n,$ \[\sum_{k=1}^n b_i(\pi,k)=a_i(\pi)(n-i+1).\]
\end{lem}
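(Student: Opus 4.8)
The plan is to reinterpret the sum $\sum_{k=1}^n b_i(\pi,k)$ as a count of lattice pairs and then observe that the two coordinates decouple. First I would recall that, by Lemma~\ref{L-k2} (applied with the parking position $p=i$), for each fixed $k$ the quantity $b_i(\pi,k)$ is exactly the number of integers $r$ satisfying
\[\max\{i-a_i(\pi)+1,\ i-k+1\}\le r\le \min\{i,\ n-k+1\};\]
this is precisely the count from which $b_i(\pi,k)$ is obtained in the proof of Theorem~\ref{T-TechI}, where an empty range simply contributes $0$. Consequently $\sum_{k=1}^n b_i(\pi,k)$ counts the pairs $(k,r)$ with $1\le k\le n$ lying in the above range.

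Next I would change variables from $(k,r)$ to $(r,s)$, where $s:=r+k-1$ is the right endpoint of the interval $[r,s]$. The displayed inequalities become the four conditions $r\ge i-a_i(\pi)+1$, $r\le i$, $s\ge i$, and $s\le n$, together with $1\le k=s-r+1\le n$. Since $a_i(\pi)\le i$ by the very definition of $a_i$, every $r$ in the allowed range satisfies $r\ge 1$, so $s\le n$ forces $k=s-r+1\le n$ automatically, while $r\le i\le s$ forces $k\ge 1$. Hence the constraint $1\le k\le n$ is vacuous, and the pair $(r,s)$ ranges freely over $\{i-a_i(\pi)+1,\ldots,i\}\times\{i,\ldots,n\}$.

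Finally I would count directly: there are $a_i(\pi)$ admissible values of $r$ and $n-i+1$ admissible values of $s$, chosen independently, which gives $\sum_{k=1}^n b_i(\pi,k)=a_i(\pi)(n-i+1)$. The one step that calls for an argument rather than bookkeeping is checking that discarding the coupling constraint $1\le k\le n$ loses nothing, and this rests entirely on the elementary bound $a_i(\pi)\le i$. (Alternatively, one could avoid this reformulation and evaluate the three-case definition of $b_i(\pi,k)$ termwise, splitting the sum at $k=n-i$ and reindexing each piece; this works but is less transparent than the interval-counting argument, so I would present the latter.)
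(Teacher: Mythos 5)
Your proof is correct, but it takes a genuinely different route from the paper. The paper evaluates the sum head-on: it splits into the cases $a_i(\pi)\le n-i$ and $a_i(\pi)\ge n-i+1$, breaks the sum at $k=n-i$, and grinds through each piece with identities such as $\sum_{k=x+1}^{x+y}k=xy+\binom{y+1}{2}$. You instead use the fact (established in the proof of Theorem~\ref{T-TechI} via Lemma~\ref{L-k2}) that $b_i(\pi,k)$ counts the admissible left endpoints $r$ of an interval of length $k$ parking its car at spot $i$, so the whole sum counts intervals $[r,s]\subseteq[n]$; after the substitution $s=r+k-1$ the constraints decouple into $r\in\{i-a_i(\pi)+1,\ldots,i\}$ and $s\in\{i,\ldots,n\}$, and the product formula falls out. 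Your one nontrivial verification --- that the coupling constraint $1\le k\le n$ is implied by the rectangle, via $a_i(\pi)\le i$ --- is exactly the right thing to check, and it holds. What your argument buys is an explanation of \emph{why} the answer factors as $a_i(\pi)(n-i+1)$: the left and right endpoints of $C_i$ are chosen independently, with the left endpoint governed by the classical-parking count $a_i(\pi)$ and the right endpoint free above $i$; this also makes the $n!$ in Theorem~\ref{T-I} transparent. What the paper's computation buys is self-containedness: it works directly from the three-case definition of $b_i(\pi,k)$ without re-invoking the interval interpretation. Either proof is acceptable; yours is shorter and more conceptual.
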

\begin{proof}
	Throughout this proof we use that the ``otherwise'' case in the definition of $b_i(\pi,k)$ can be written as $n-i+1+a_i(\pi)-\max\{a_i(\pi),k\}$.
	
	We first consider the case $a_i(\pi)\le n-i$ and split the sum into two parts.  For $a_i(\pi)<n-i$ we have \begin{align}\sum_{k=1}^{n-i} b_i(\pi,k)&=\sum_{k=1}^{n-i} \min(a_i(\pi),k)=\sum_{k=1}^{a_i(\pi)}k+\sum_{k=a_i(\pi)+1}^{n-i} a_i(\pi)\nonumber\\ &={a_i(\pi)+1\choose 2}+(n-i-a_i(\pi))a_i(\pi)\label{E-B11}.\end{align}
	If $a_i(\pi)=n-i$ the same formula holds by essentially the same reasoning.
	
	If $k>n-i+a_i(\pi)$ we have $b_i(\pi,k)=0$, so the rest of the sum is \begin{align}\sum_{k=n-i+1}^{n-i+a_i(\pi)} b_i(\pi,k)&=(n-i+a_i(\pi)+1)a_i(\pi)-\sum_{k=n-i+1}^{n-i+a_i(\pi)} \max\{a_i(\pi),k\}\nonumber\\ &=(n-i+a_i(\pi)+1)a_i(\pi)-\sum_{k=n-i+1}^{n-i+a_i(\pi)} k,\label{E-B21}\end{align}
	where we used our assumption $a_i(\pi)\le n-i<k$ in this last equality.  Using the identity $\sum_{k=x+1}^{x+y} k=xy+{y+1\choose 2}$, we conclude that \eqref{E-B21} equals
	\[
		(a_i(\pi)+1)a_i(\pi)-{a_i(\pi)+1\choose 2}.
	\]
	Adding this to \eqref{E-B11} gives the desired result.
	
	Now assume $a_i(\pi)\ge n-i+1$.  In this case we have
	\begin{align}\sum_{k=1}^{n-i} b_i(\pi,k)&=\sum_{k=1}^{n-i} \min(a_i(\pi),k)=\sum_{k=1}^{n-i}k\nonumber\\ &={n-i+1\choose 2}\label{E-B12}.\end{align}
	
	The rest of the sum is 
	\begin{align}\sum_{k=n-i+1}^{n-i+a_i(\pi)} b_i(\pi,k)&=(n-i+a_i(\pi)+1)a_i(\pi)-\sum_{k=n-i+1}^{n-i+a_i(\pi)} \max\{a_i(\pi),k\}\nonumber\\ &=(n-i+a_i(\pi)+1)a_i(\pi)-a_i(\pi)(a_i(\pi)-n+i)-\sum_{k=1+a_i(\pi)}^{n-i+a_i(\pi)} k\nonumber\\ &=(2n-2i+1)a_i(\pi)-{n-i+1\choose 2}-a_i(\pi)(n-i).\label{E-B22}\end{align}
	Adding \eqref{E-B12} and \eqref{E-B22} gives the desired result.
\end{proof}

\begin{proof}[Proof of Theorem~\ref{T-I}]
	Observe that interval parking functions are exactly $\c{K}$-interval parking functions with $K_i=[n]$ for all $i$, so by Theorem~\ref{T-TechI}, Lemma~\ref{L-Sum}, and Proposition~\ref{P-Park}; we have
	\[\IPF(n,\pi)=\prod_{i=1}^n a_i(\pi)(n-i+1)=n!\prod_{i=1}^n a_i(\pi)=n!\cdot \PF(n,\pi).\]
	
	Using \eqref{E-Park}, we find 
	\[\IPF(n)=\sum_{\pi\in \c{S}_n} \IPF(n,\pi)=n!\sum_{\pi \in \c{S}_n}\PF(n,\pi)=n!\cdot \PF(n)=n!\cdot (n+1)^{n-1}.\]
\end{proof}

\section{Acknowledgments}
The author was fortunate to have many fruitful discussions about this topic at the Graduate Research Workshop in Combinatorics.  In particular we would like to thank Emma Christensen, Ryan DeMuse, Sean English, Jeremy Martin, Puck Rombach, Mike Ross, and Mei Yin.

This material is based upon work supported
by the National Science Foundation Graduate Research Fellowship under Grant No. DGE-1650112.  This work was completed in part at the 2019 Graduate Research Workshop in Combinatorics, which was supported in part by NSF grant \#1923238, NSA grant \#H98230-18-1-0017,  a generous award from the Combinatorics Foundation, and Simons Foundation Collaboration Grants \#426971 (to M. Ferrara), \#316262 (to S. Hartke) and \#315347 (to J. Martin).

\end{document}